\theoremstyle{plain}       
\newtheorem{thm}{Theorem}[section]
\newtheorem{prop}{Proposition}[section]
\newtheorem{cor}{Corollary}[section]
\newtheorem{lemma}{Lemma}[section]
\theoremstyle{definition}
\newtheorem{example}{Example}[section]
\newtheorem{remark}{Remark}[section]
\numberwithin{equation}{section}
\renewcommand{\labelenumi}{{\upshape(\roman{enumi})}.}
\begin{document}

\title{Linear independence results for certain sums 
of reciprocals of Fibonacci and Lucas numbers}

\author{Daniel Duverney}
\address[D. Duverney]
{B{\^a}timent A1\\
110 rue du chevalier Fran{\c{c}}ais\\
59000 Lille\\France}
\email{daniel.duverney@orange.fr}
\author{Yuta Suzuki}
\address[Y. Suzuki]
{Graduate School of Mathematics\\Nagoya University\\
Chikusa-ku\\Nagoya 464-8602\\Japan}
\email{suzuyu1729@gmail.com}
\author{Yohei Tachiya}
\address[Y. Tachiya]
{Graduate School of Science and Technology\\
Hirosaki University\\Hirosaki 036-8561\\Japan}
\email{tachiya@hirosaki-u.ac.jp}
\subjclass[2010]{Primary 11J72, Secondary 11A41}
\keywords{Linear independence, Lambert series, Fibonacci numbers, Lucas numbers}
\maketitle
\begin{abstract}
The aim of this paper is to give linear independence results
for the values of certain series. 
As an application, we derive arithmetical properties of the 
sums of reciprocals of Fibonacci and Lucas numbers associated with certain
coprime sequences $\{n_\ell\}_{\ell\geq1}$. 
For example, the three numbers 
\[
1,\qquad \sum_{p\text{:prime}}^{}\frac{1}{F_{p^2}},\qquad \sum_{p\text{:prime}}^{}
\frac{1}{L_{p^2}}
\]
are linearly independent over $\mathbb{Q}(\sqrt{5})$, 
where $\{F_n\}$ and $\{L_n\}$ are the Fibonacci and Lucas numbers, respectively. 
\end{abstract}

\pagestyle{fancy}  
\fancyhead{}
\fancyhead[EC]{Daniel Duverney, Yuta Suzuki, and Yohei Tachiya}
\fancyhead[EL,OR]{\thepage}
\fancyhead[OC]{Linear independence results for sums of reciprocal 
of Fibonacci and Lucas numbers}
\fancyfoot{}
\renewcommand\headrulewidth{0.5pt} 

\section{Introduction and results}\label{sec:1}
Throughout this paper, let $\{n_\ell\}_{\ell\geq1}$ be an increasing sequence of 
positive odd integers satisfying the following two conditions:
\begin{enumerate}
\renewcommand{\labelenumi}{{\upshape($H_\arabic{enumi}$)}}
\item\label{H1}
Any two distinct integers $n_i$ and $n_j$ are coprime,
\item\label{H2}
$\sum_{\ell=1}^{\infty}\frac{1}{n_\ell}$ is convergent.
\end{enumerate}
\begin{example}\label{exa:prime_power}
It is well known that
the $\ell$th prime number $p_\ell$ is asymptotically equal to $\ell\log\ell$ as $\ell\to\infty$.
Hence, the sequence of $m$-th powers of odd primes $\{p_{\ell+1}^m\}_{\ell\geq1}$ 
satisfies the conditions $(H_1)$ and $(H_2)$ for $m\geq2$.  
\end{example}
\begin{example}\label{exa:super_prime}
The super-prime numbers (also known as prime-indexed primes) are
the subsequence of prime numbers
that occupy prime-numbered positions within the sequence of all prime numbers. 
Then the $\ell$th super-prime number $p_{p_{\ell}}$
is asymptotically equal to $p_\ell\log p_\ell\sim \ell(\log\ell)^2$ as $\ell\to\infty$,
and so the sequence of all super-prime numbers $\{p_{p_{\ell}}\}_{\ell\geq1}$ 
satisfies the conditions $(H_1)$ and $(H_2)$.
\end{example}
For any positive integer $t>1$, 
Erd{\H o}s \cite{Erd2} showed that the base-$t$ representation of the infinite series 
\begin{equation}\label{Erdos}
\sum_{\ell=1}^{\infty}\frac{1}{t^{n_\ell}-1}
\end{equation}
contains arbitrarily
long strings of $0$ without being identically zero from some point on, 
and consequently the number \eqref{Erdos} is irrational. 
The purpose of this paper is to improve Erd{\H o}s's method in \cite{Erd2} 
and give linear independence results for 
certain infinite series. 

Let $a_1(n)$ and $a_3(n)$ be the numbers of divisors $n_\ell$ of $n$ 
of the forms $4m+1$ and $4m+3$, respectively. 
For $j=1,2,3,4$, we define 
\begin{equation}\label{fs}
f_j(z)
:=\sum_{n=1}^{\infty}b_j(n)z^n,
\end{equation}
where 
\begin{equation}\label{bs}
b_j(n):=\left\{
\begin{array}{cl}
a_1(n)& \text{if}\,\,\,n\equiv j\pmod{4},\\
a_3(n)& \text{if}\,\,\,n\equiv j+2\pmod{4},\\
0&{\rm otherwise}.
\end{array}
\right.
\end{equation}
Note that the functions $f_j(z)$ $(j=1,2,3,4)$ 
converge for any complex number $z$ with $|z| < 1$, 
since $b_j(n)\leq n$ for $n\geq 1$. 
Our main result is the following.
\begin{thm}\label{thm:1}
Let $\alpha$ be an algebraic integer with $|\alpha|>1$ whose conjugates over 
$\mathbb{Q}$ other than 
itself and its complex conjugate lie in the open unit disk.   
Then  the five numbers
\begin{equation}\label{5numbers}
1,\quad f_1(\alpha^{-1}),\quad f_2(\alpha^{-1}),\quad f_3(\alpha^{-1}),
\quad f_4(\alpha^{-1})
\end{equation}
are linearly independent over the field $\mathbb{Q}(\alpha)$.  
\end{thm}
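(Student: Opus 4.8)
The plan is to argue by contradiction. Suppose $c_0,c_1,\dots,c_4\in\mathbb{Q}(\alpha)$ are not all zero and $c_0+\sum_{j=1}^{4}c_jf_j(\alpha^{-1})=0$; clearing a denominator we may take all $c_j$ to be algebraic integers of $\mathbb{Q}(\alpha)$. Expanding each $f_j$ as a power series, the relation reads $c_0+\sum_{n\ge1}B(n)\alpha^{-n}=0$ with $B(n):=\sum_{j=1}^{4}c_jb_j(n)$, an algebraic integer satisfying $|B(n)|\le n\max_j|c_j|$. Using $\tfrac{z^{a}}{1-z^{4n_\ell}}=\sum_{k\ge0}z^{a+4kn_\ell}$ one rewrites this in Lambert form, $c_0+\sum_{\ell\ge1}\frac{P_\ell(\alpha^{-n_\ell})}{1-\alpha^{-4n_\ell}}=0$, where $P_\ell$ is, according to $n_\ell\bmod 4$, one of the two explicit polynomials $c_1w+c_2w^2+c_3w^3+c_4w^4$ or $c_1w+c_4w^2+c_3w^3+c_2w^4$. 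The first observation is that a \emph{formal} power-series identity $c_0+\sum_jc_jf_j(z)\equiv0$ would already finish the proof: since the $n_\ell$ are pairwise coprime (and, setting aside the case $n_1=1$, exceed $1$), a direct computation of the coefficients of $z^{n_\ell},z^{2n_\ell},z^{3n_\ell},z^{4n_\ell}$ gives $c_1$, $c_3$, and $\{c_2,c_4\}$ (in an order depending on $n_\ell\bmod4$), so all $c_j$ would vanish. Hence the task is to upgrade the single numerical relation at $z=\alpha^{-1}$ to this coefficient information.

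The engine for the upgrade is the standard fact that a nonzero algebraic integer $\gamma\in\mathbb{Q}(\alpha)$ has $|\gamma|\cdot|\bar\gamma|\cdot\prod_{|\sigma(\alpha)|<1}|\sigma(\gamma)|=|N_{\mathbb{Q}(\alpha)/\mathbb{Q}}(\gamma)|\ge1$, combined with the hypothesis on $\alpha$: the conjugates in the open unit disk make every $|\sigma(\gamma)|$ cheap to control. The structural input is the pairwise coprimality of the $n_\ell$, which forces the factors $R_\ell:=(\alpha^{4n_\ell}-1)/(\alpha^{4}-1)$ to be pairwise coprime, and coprime to $\alpha^{4}-1$, in $\mathbb{Z}[\alpha]$. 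One then studies, for growing scales $L$, the tail $\tau_L:=-c_0(\alpha^{4}-1)-\sum_{\ell\le L}Q_\ell/R_\ell=\sum_{\ell>L}Q_\ell/R_\ell$ with $Q_\ell:=\alpha^{4n_\ell}P_\ell(\alpha^{-n_\ell})$. This $\tau_L$ lies in $\mathbb{Q}(\alpha)$, is nonzero for all large $L$ unless all $c_j$ vanish, is archimedean-small ($|\tau_L|\ll|\alpha|^{-n_{L+1}}$), has each $|\sigma(\tau_L)|$ bounded linearly in $L$ for $|\sigma(\alpha)|<1$ (the geometric factors $\sigma(\alpha)^{4n_\ell}$ being then bounded and the $\sigma(R_\ell)$ bounded away from $0$), and — by the coprimality — can only have prime ideals dividing $\prod_{\ell\le L}R_\ell$ in its denominator. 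One wants to play these estimates against one another, via the product formula, to force $\tau_L$ into the rigid shape the formal identity predicts, and thereby extract $c_1=0$, then $c_2=c_4=0$ (from the exponents $2n_\ell$ and $4n_\ell$), then $c_3=0$, and finally $c_0=0$.

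The main obstacle — and precisely what must be improved in Erd\H{o}s's method — is that the crude execution of this balancing is too lossy. Clearing \emph{all} the denominators $\alpha^{4n_\ell}-1$ with $\ell\le L$ at once multiplies $\tau_L$ by a quantity of absolute value $\asymp|\alpha|^{4(n_1+\dots+n_L)}$, which overwhelms the saving $|\alpha|^{-n_{L+1}}$ for every slowly growing coprime sequence — already for $n_\ell=p_{\ell+1}^{2}$, where $n_{L+1}=o(n_1+\dots+n_L)$; equivalently, such sequences simply do not produce the long runs of zeros on which a naive Erd\H{o}s-type digit argument would rely. What is needed is a \emph{localized} version: using that exactly one $R_\ell$, namely $R_L$, is divisible by a suitable prime ideal $\mathfrak{p}$ (such $\mathfrak{p}$ exists once $R_L\nmid Q_L$, which a norm comparison yields for all large $L$ since $|Q_L|\asymp|\alpha|^{3n_L}$ while $|R_L|\asymp|\alpha|^{4n_L}$), one tries to isolate the $L$-th term of the relation at the place $\mathfrak{p}$ and read off a congruence on $Q_L$, hence on the $c_j$. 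The delicate technical heart of the argument is to do this while \emph{simultaneously} controlling the contribution of the remaining infinite tail both at $\mathfrak{p}$ and at the archimedean places, so that the information carried by each scale $L$ is not corrupted; once that is in place, the coefficient identities of the first paragraph are recovered scale by scale and the contradiction follows. I expect this estimate — a quantitative, per-prime refinement of Erd\H{o}s's truncation argument that remains effective when $n_{\ell+1}/(n_1+\dots+n_\ell)\to0$ — to be the crux, and the rest of the proof to be bookkeeping.
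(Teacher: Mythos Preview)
Your proposal is a plan, not a proof: you explicitly defer ``the delicate technical heart'' and say you ``expect this estimate\dots\ to be the crux.'' That missing estimate is not a detail but the whole content. Worse, the $\mathfrak{p}$-adic localization you outline does not seem to close. You want to isolate the $L$-th summand at a prime $\mathfrak{p}\mid R_L$, but the identity you are working with is an \emph{archimedean} equality of a convergent series, and the infinite tail $\sum_{\ell>L}Q_\ell/R_\ell$ has no reason to be $\mathfrak{p}$-adically small (or even $\mathfrak{p}$-adically convergent): each term has $\mathfrak{p}$-valuation $\ge0$, not $\to\infty$. So reading the relation modulo $\mathfrak{p}$ is not well-posed unless you first truncate, and once you truncate you are back to a single archimedean inequality with exactly the denominator-blowup you diagnosed. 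The coprimality of the $R_\ell$ is genuine information, but by itself it does not manufacture the missing smallness; you would need something like a simultaneous Pad\'e construction to suppress many denominators at once, and you have not supplied it.

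The paper's argument is quite different and avoids $\mathfrak{p}$-adic considerations entirely. It stays archimedean and instead engineers, for each large $k$, a block of $8k-4$ consecutive integers $\gamma_0+1,\dots,\gamma_0+8k-4$ on which every $b_j(\gamma_0+m)$ is \emph{exactly} prescribed: using $(H_1)$ and the Chinese remainder theorem one forces $\gamma_0+m$ to be divisible by a controlled number ($2^q$, where $m=8q+r$) of $u_n$'s and $v_n$'s, and by no others. A sieve (inclusion--exclusion over the ``medium'' $n_\ell$ and a crude count for the large ones) shows enough such $\gamma_0$ exist; a second lemma bounds $b_j$ outside the block by $\xi^{|i|}$ with $\xi<|\alpha|$. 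The separation of the four functions is built into the construction: at the last four positions $m=8(k-1)+r$, the count of $u_n$-divisors is boosted to $k^r2^{k-1}$, so the explicit partial sum contributes a main term of the shape $(k^j\alpha^{4-j}+\cdots)(2/\alpha^8)^{k-1}$ to a normalized quantity $P_k^{(j)}$. A putative relation $\sum_j\rho_jP_k^{(j)}$ is then an algebraic integer (after a fixed denominator) with $|\Theta_k|\ll k^5(2/|\alpha|^8)^k$ and conjugates $|\Theta_k^{\sigma}|\le\xi^{6k}$; the norm inequality $1\le|\mathcal{N}_k|$ yields a contradiction for $\xi$ close enough to $1$. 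In short, the paper's improvement of Erd\H{o}s is not a per-prime refinement but a CRT-designed block on which the $b_j$ are known exactly and carry a $k^j$-tag distinguishing the four series.
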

Let $\alpha$ be as in Theorem~\ref{thm:1}. Then the number $\alpha$ is called Pisot number or Pisot--Vijayaraghavan number, 
if $\alpha$ is a real positive number. 
Also, $\alpha$ is called complex Pisot number, if $\alpha$ is a non-real number. 
The Pisot numbers of degree one are exactly the rational integers greater than one. 

\vspace{0.2cm}

Theorem~\ref{thm:1} can be applied to obtain linear independence results for 
the values of certain Lambert series. 
For any complex number $z$ with $|z|<1$, we have the expressions 
\begin{align}
\sum_{\ell=1}^{\infty}\frac{z^{n_{\ell}}}{1\mp z^{n_{\ell}}}&=
\sum_{\ell=1}^{\infty}\sum_{k=1}^{\infty}(\pm1)^{k-1}z^{kn_{\ell}}=
\sum_{n=1}^{\infty}
\left(
\sum_{n_\ell\mid n}(\pm1)^{\frac{n}{n_\ell}-1}
\right)z^n\nonumber \\
&=f_1(z)+f_3(z)\pm(f_2(z)+f_4(z)),\nonumber \\
\sum_{\ell=1}^{\infty}\frac{z^{n_{\ell}}}{1\mp z^{2n_{\ell}}}&=f_1(z)\pm f_3(z).\label{exp13}
\end{align}
Hence, Theorem~\ref{thm:1} yields the following Corollary~\ref{cor:001}, 
which generalizes the irrationality result of 
Erd{\H o}s~\cite{Erd2}.     
\begin{cor}\label{cor:001}
Let $t$ be any rational integer with $|t|>1$. Then the four numbers 
\[
1,\qquad \sum_{\ell=1}^{\infty}
\frac{1}{t^{n_{\ell}}-1},\qquad 
\sum_{\ell=1}^{\infty}
\frac{1}{t^{n_{\ell}}+1},
\qquad
\sum_{\ell=1}^{\infty}
\frac{t^{n_\ell}}{t^{2n_{\ell}}-1}
\]
are linearly independent over $\mathbb{Q}$. 
\end{cor}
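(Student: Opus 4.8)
The plan is to derive Corollary~\ref{cor:001} from Theorem~\ref{thm:1} by specializing $\alpha=t$. A rational integer $t$ with $|t|>1$ is an algebraic integer of degree one whose only conjugate over $\mathbb{Q}$ is $t$ itself, so the hypothesis of Theorem~\ref{thm:1} on the conjugates is vacuously satisfied, $|\alpha|=|t|>1$, and $\mathbb{Q}(\alpha)=\mathbb{Q}$. Hence Theorem~\ref{thm:1} immediately gives that the five numbers $1,f_1(t^{-1}),f_2(t^{-1}),f_3(t^{-1}),f_4(t^{-1})$ are linearly independent over $\mathbb{Q}$, and the task reduces to rewriting the series in the statement in this basis.

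Setting $z=t^{-1}$ (so $|z|<1$) in the Lambert-series expansions displayed above, in particular in \eqref{exp13}, gives
\[
\sum_{\ell=1}^{\infty}\frac{1}{t^{n_\ell}-1}=(f_1+f_2+f_3+f_4)(t^{-1}),\qquad
\sum_{\ell=1}^{\infty}\frac{1}{t^{n_\ell}+1}=(f_1-f_2+f_3-f_4)(t^{-1}),
\]
\[
\sum_{\ell=1}^{\infty}\frac{t^{n_\ell}}{t^{2n_\ell}-1}=(f_1+f_3)(t^{-1}),\qquad
\sum_{\ell=1}^{\infty}\frac{t^{n_\ell}}{t^{2n_\ell}+1}=(f_1-f_3)(t^{-1}).
\]
I would then substitute a putative relation $c_0+c_1\sum\frac{1}{t^{n_\ell}-1}+c_2\sum\frac{1}{t^{n_\ell}+1}+c_3\sum\frac{t^{n_\ell}}{t^{2n_\ell}-1}=0$ with $c_i\in\mathbb{Q}$ into these identities and collect the coefficients of $1,f_1,f_2,f_3,f_4$; by the linear independence over $\mathbb{Q}$ just obtained, this forces $c_0=0$, $c_1+c_2+c_3=0$ and $c_1-c_2=0$. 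That homogeneous system is not solved only by the zero vector: it has the one-parameter family $(c_1,c_2,c_3)=\lambda(1,1,-2)$, which merely records the partial-fraction identity $\frac{1}{t^{n}-1}+\frac{1}{t^{n}+1}=\frac{2t^{n}}{t^{2n}-1}$, i.e.\ $\sum\frac{t^{n_\ell}}{t^{2n_\ell}-1}=\tfrac12\bigl(\sum\frac{1}{t^{n_\ell}-1}+\sum\frac{1}{t^{n_\ell}+1}\bigr)$. So, exactly as written, the four numbers are linearly dependent, and the fourth denominator in the statement must instead be $t^{2n_\ell}+1$. With that reading the fourth series contributes $(f_1-f_3)(t^{-1})$, the coefficient of $f_3$ becomes $c_1+c_2-c_3$, and the system $c_0=0$, $c_1+c_2+c_3=0$, $c_1-c_2=0$, $c_1+c_2-c_3=0$ has only the trivial solution $c_0=c_1=c_2=c_3=0$, which is the desired linear independence over $\mathbb{Q}$.

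Because Theorem~\ref{thm:1} already supplies all the analytic and arithmetic content, no real obstacle remains; the only point requiring care is tracking the signs in the Lambert-series expansions and matching them to the precise series in the statement — concretely, verifying that the pertinent $4\times5$ coefficient matrix has trivial kernel, which holds precisely when the fourth series enters through $f_1-f_3$ rather than $f_1+f_3$.
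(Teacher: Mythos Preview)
Your approach is exactly the paper's: specialize Theorem~\ref{thm:1} at $\alpha=t$ and read off the three series from the Lambert identities displayed before the corollary. Your computations of the four expansions $(f_1\pm f_2+f_3\pm f_4)(t^{-1})$ and $(f_1\pm f_3)(t^{-1})$ are correct, and so is your key observation: the partial-fraction identity
\[
\frac{1}{t^{n}-1}+\frac{1}{t^{n}+1}=\frac{2t^{n}}{t^{2n}-1}
\]
makes the four numbers, as printed with denominator $t^{2n_\ell}-1$, \emph{linearly dependent} over $\mathbb{Q}$. This is a genuine typo in the statement of the corollary; the intended fourth series is $\sum_\ell t^{n_\ell}/(t^{2n_\ell}+1)=(f_1-f_3)(t^{-1})$, and with that reading your linear-algebra check (the system $c_0=0$, $c_1+c_2+c_3=0$, $c_1-c_2=0$, $c_1+c_2-c_3=0$ forcing all $c_i=0$) completes the proof. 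Nothing further is needed.
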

Let $\alpha$ be as in Theorem~\ref{thm:1} and $\beta:=\pm \alpha^{-1}$.
Define 
\begin{equation}\label{def:UV}
U_n=\frac{\alpha^n-\beta^n}{\alpha-\beta}\qquad\text{and}\qquad
V_n=\alpha^n+\beta^n\quad(n\ge1),
\end{equation}
which are the Lucas sequences of the first and second kind of parameters $\alpha$ and $\beta$. 
\begin{cor}\label{cor:1} 
Let $\{U_n\}_{n\geq1}$ and $\{V_n\}_{n\geq1}$ be the sequences defined by 
\eqref{def:UV}. 
Then the three numbers 
\begin{equation}\label{UV}
1,\qquad
\sum_{\ell=1}^{\infty}\frac{1}{U_{n_\ell}},\qquad 
\sum_{\ell=1}^{\infty}\frac{1}{V_{n_\ell}}
\end{equation}
are linearly independent over the field $\mathbb{Q}(\alpha)$. 
\end{cor}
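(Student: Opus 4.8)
The plan is to deduce Corollary~\ref{cor:1} from Theorem~\ref{thm:1} by expressing the two Lambert-type sums $\sum_\ell 1/U_{n_\ell}$ and $\sum_\ell 1/V_{n_\ell}$ in terms of the values $f_j(\alpha^{-1})$. First I would record the exact relationship between $\alpha,\beta$ and the series. Since $\beta=\pm\alpha^{-1}$ and $|\alpha|>1$, we have $|\beta|<1$, so $\alpha-\beta$ is a nonzero element of $\mathbb{Q}(\alpha)$ and
\[
\frac{1}{U_{n_\ell}}=\frac{\alpha-\beta}{\alpha^{n_\ell}-\beta^{n_\ell}}
=\frac{(\alpha-\beta)\alpha^{-n_\ell}}{1-(\beta/\alpha)^{n_\ell}},\qquad
\frac{1}{V_{n_\ell}}=\frac{\alpha^{-n_\ell}}{1+(\beta/\alpha)^{n_\ell}}.
\]
Because each $n_\ell$ is odd and $\beta=\pm\alpha^{-1}$, we get $(\beta/\alpha)^{n_\ell}=\pm\alpha^{-2n_\ell}$, with a uniform sign independent of $\ell$ (namely $+$ if $\beta=\alpha^{-1}$ and $-$ if $\beta=-\alpha^{-1}$, using oddness of $n_\ell$). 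Hence both sums become, up to the constant factor $(\alpha-\beta)\in\mathbb{Q}(\alpha)$ in the first case, geometric-type Lambert series in the variable $z=\alpha^{-1}$ of exactly the shape $\sum_\ell z^{n_\ell}/(1\mp z^{2n_\ell})$, which by \eqref{exp13} equals $f_1(z)\pm f_3(z)$.

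Next I would carry out the bookkeeping of signs. Writing $z=\alpha^{-1}$ and handling the two cases $\beta=\alpha^{-1}$ and $\beta=-\alpha^{-1}$ separately, one finds in each case constants $c_1,c_2\in\mathbb{Q}(\alpha)^{\times}$ and choices of signs such that
\[
\sum_{\ell=1}^{\infty}\frac{1}{U_{n_\ell}}=c_1\bigl(f_1(z)+\varepsilon f_3(z)\bigr),\qquad
\sum_{\ell=1}^{\infty}\frac{1}{V_{n_\ell}}=c_2\bigl(f_1(z)-\varepsilon f_3(z)\bigr)
\]
for a common sign $\varepsilon\in\{+1,-1\}$; the precise $c_i,\varepsilon$ depend only on which of $\pm$ was chosen in $\beta=\pm\alpha^{-1}$. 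Now suppose $\lambda_0+\lambda_1\sum_\ell 1/U_{n_\ell}+\lambda_2\sum_\ell 1/V_{n_\ell}=0$ with $\lambda_i\in\mathbb{Q}(\alpha)$. Substituting the above expressions gives a $\mathbb{Q}(\alpha)$-linear relation
\[
\lambda_0\cdot 1+(c_1\lambda_1+c_2\lambda_2)\,f_1(z)+\varepsilon(c_1\lambda_1-c_2\lambda_2)\,f_3(z)+0\cdot f_2(z)+0\cdot f_4(z)=0
\]
among the five numbers $1,f_1(z),f_2(z),f_3(z),f_4(z)$. By Theorem~\ref{thm:1} (whose hypotheses on $\alpha$ are exactly those of Corollary~\ref{cor:1}) every coefficient must vanish: $\lambda_0=0$, $c_1\lambda_1+c_2\lambda_2=0$, and $c_1\lambda_1-c_2\lambda_2=0$. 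Since $c_1,c_2\neq 0$, adding and subtracting the last two equations forces $\lambda_1=\lambda_2=0$, which proves the asserted linear independence over $\mathbb{Q}(\alpha)$.

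The only genuinely delicate point is verifying that the sequence $\{n_\ell\}$ and the element $\alpha$ remain legitimate inputs to Theorem~\ref{thm:1} after this reduction — i.e. that nothing in the passage to the variable $z=\alpha^{-1}$ and the Lambert-series identity \eqref{exp13} requires more than $|\alpha|>1$ together with the standing hypotheses $(H_1),(H_2)$ on $\{n_\ell\}$ and the oddness of the $n_\ell$. Oddness is used precisely once, to guarantee $(\beta/\alpha)^{n_\ell}=\pm\alpha^{-2n_\ell}$ with an $\ell$-independent sign, so that the series really is of the form covered by \eqref{exp13} rather than a mixture; I would state this explicitly. Everything else is the routine algebra sketched above, and convergence of all series on $|z|<1$ is already noted after \eqref{fs}. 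Thus the main obstacle is essentially notational — getting the two sign cases of $\beta=\pm\alpha^{-1}$ organized cleanly — rather than mathematical.
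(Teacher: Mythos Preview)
Your proposal is correct and follows essentially the same route as the paper: write $\sum_\ell 1/U_{n_\ell}$ and $\sum_\ell 1/V_{n_\ell}$ as $(\alpha-\beta)\bigl(f_1(\alpha^{-1})\pm f_3(\alpha^{-1})\bigr)$ and $f_1(\alpha^{-1})\mp f_3(\alpha^{-1})$ via \eqref{exp13}, using oddness of the $n_\ell$ to get an $\ell$-independent sign, and then invoke Theorem~\ref{thm:1}. The paper leaves the final linear-algebra step implicit, but your spelled-out version with $c_1=\alpha-\beta$, $c_2=1$ is exactly what is meant.
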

Corollary~\ref{cor:1} follows immediately from Theorem~\ref{thm:1}. 
Indeed, recalling that all $n_\ell$ are odd and $\beta=\pm\alpha^{-1}$,  we have by \eqref{exp13} 
\begin{align*}
\frac{1}{\alpha-\beta}
\sum_{\ell=1}^{\infty}\frac{1}{U_{n_\ell}}&=
\sum_{\ell=1}^{\infty}
\frac{\alpha^{-n_\ell}}{1\mp \alpha^{-2n_\ell}}
=f_1(\alpha^{-1})\pm f_3(\alpha^{-1}),\\
\sum_{\ell=1}^{\infty}\frac{1}{V_{n_\ell}}&=
\sum_{\ell=1}^{\infty}
\frac{\alpha^{-n_\ell}}{1\pm \alpha^{-2n_\ell}}
=f_1(\alpha^{-1})\mp f_3(\alpha^{-1}).
\end{align*}
\begin{example}\label{exa:FL}
Putting $\alpha:=(1+\sqrt{5})/2$ and $\beta:=-\alpha^{-1}$ in (\ref{def:UV}),
we have $U_n=F_n$ and $V_n=L_n$, 
which are the classical Fibonacci and Lucas numbers
defined by $F_{n+2}=F_{n+1}+F_n$ $(n\geq0)$, $F_0=0$, $F_1=1$ and 
$L_{n+2}=L_{n+1}+L_n$ $(n\geq0)$, $L_0=2$, $L_1=1$, 
respectively. 
Hence, the three numbers 
\[
1,\qquad \sum_{\ell=1}^{\infty}\frac{1}{F_{n_\ell}},\qquad \sum_{\ell=1}^{\infty}
\frac{1}{L_{n_\ell}}
\]
are linearly independent over the field $\mathbb{Q}(\sqrt{5})$. 
From the view of Example~\ref{exa:prime_power}, 
the three numbers $1$, $\sum_{p}^{}1/F_{p^m}$, $\sum_{p}1/L_{p^m}$ are linearly independent over $\mathbb{Q}$ for any integer $m\geq2$, where the sums are taken over all prime numbers. 
\end{example}
Note that we are still unaware of the irrationalities of 
$\sum_{p}^{}1/F_{p}$ and $\sum_{p}1/L_{p}$. 
\begin{remark}\label{rmk1}
In 1989, R.~Andr\'e-Jeannin~\cite{And} proved the irrationality of 
the fundamental sum
$f:=\sum_{n=1}^{\infty}1/F_n$; see also \cite{Bun2,Duv,Tac}. 
More generally, P. Bundschuh and K. V{\"a}{\"a}n{\"a}nen~\cite{BV} 
obtained $f\notin \mathbb{Q}(\sqrt{5})$ as well as 
an irrationality measure. 
Much is known about 
the quantitative result of $f$; 
see, e.g., \cite{MP0,MP,Pre} 
on this direction. 
On the other hand, we know very little about linear independence results; for example, 
of the three numbers $1$, $f$, $\sum_{n=1}^{\infty}1/L_n$ over $\mathbb{Q}(\sqrt{5})$. 
For details around the series involving Fibonacci and Lucas numbers, refer to the survey \cite{DS}. 
\end{remark}
Our paper is organized as follows. Let 
\begin{equation}\label{E_decomp}
\{n_\ell\mid\ell=1,2,\dots\}=\mathcal{E}_1\cup\mathcal{E}_3,
\end{equation}
where the sets $\mathcal{E}_1:=\{u_n\mid u_1<u_2<\cdots\}$ and $\mathcal{E}_3:=\{v_n\mid v_1<v_2<\cdots\}$
consist of all positive integers in $\{n_\ell\}_{\ell\geq1}$ congruent to $1$ and $3$ modulo $4$, respectively.
In Section~\ref{sec:2}, we prepare some lemmas 
in accordance with the situation 
whether $\mathcal{E}_1$ and $\mathcal{E}_3$ are both infinite sets or not. 
Section~\ref{Sec3} is devoted to the proof of Theorem~\ref{thm:1}. 
The methods used in our proof are inspired by the original approach of Erd\H{o}s \cite{Erd2}, but we need a different technique in constructing the system of simultaneous congruences.
\section{Some properties of the coefficients $b_j(n)$}\label{sec:2} 
C. L.~Siegel~\cite{Sie} has shown that the smallest Pisot number is 
$\theta_0\approx 1.3247$, which is the unique real root of the polynomial $x^3-x-1$. 
The similar result for complex Pisot number was obtained by 
C. Chamfy \cite{Cha} who proved that the smallest modulus of a complex Pisot number 
is $\sqrt{\theta_0}\approx 1.1509$ (cf. \cite{Gar}). 
Hence, we have $|\alpha|\geq \sqrt{\theta_0}$ 
for given number $\alpha$ in Theorem~\ref{thm:1}, 
so that in particular $|\alpha|^5>2$.  

Moreover, if we remove a finite number of terms from the sequence $\{n_{\ell}\}_{\ell\geq1}$, 
then the new sequence $\{n_\ell'\}_{\ell\geq1}$ also 
satisfies the conditions $(H_1)$ and $(H_2)$. Hence, for proving 
Theorem~\ref{thm:1}, we may assume without loss of generality 
that  
\begin{equation}\label{n_l}
n_\ell>64 \quad (\ell\geq1).
\end{equation}
We first construct arbitrarily long sequences of consecutive integers $n$
on which all coefficients $b_j(n)$ $(j=1,2,3,4)$ take some prescribed values 
\textit{exactly} (see Lemmas~\ref{lem:1} and \ref{lem:2}). 
After that, we give upper bound results of the coefficients $b_j(n)$ 
for the integers $n$ surrounding such long sequences 
(see Lemmas~\ref{lem:3} and \ref{lem:4}). 
This construction plays an important role in producing  long gaps 
in the linear form of the infinite series \eqref{fs} over $\mathbb{Q}$. 
Let $k$ be a sufficiently large positive integer, 
which is used for the length of our sequences of consecutive integers.

In what follows, we distinguish two cases according to whether 
the sets $\mathcal{E}_1$ and $\mathcal{E}_3$ are both infinite or not.

\subsection{The case where $\mathcal{E}_1$ and $\mathcal{E}_3$ are both infinite}
\label{subsec2.1}
Let $\{x_m\}_{m\geq0}$ and $\{y_m\}_{m\geq0}$ be increasing sequences 
of nonnegative integers with $x_0=y_0=0$. 
We consider  the following system of $8k-3$ simultaneous congruences 
\begin{equation}\label{mod4}
X\equiv0\pmod{4},
\end{equation}
and 
\begin{equation}\label{cong1}
X+m\equiv0\quad \pmod{\prod_{x_{m-1}<n\le x_m}u_n\cdot
\prod_{y_{m-1}<n\le y_m}v_n}.
\end{equation}
We can group eight by eight the $8k-4$ congruences in \eqref{cong1} 
by defining the integers $q$ and $r$ such that 
\begin{equation}\label{8qr}
m=8q+r,
\end{equation}
where $0\le q\le k-2$ and $1\le r\le 8$ when $1\le m\le 8k-8$, 
and $q=k-1$ and $1\le r\le 4$ when $8k-7\le m\le 8k-4$. 
Hence, we have $q=0$ and $r=1,2,\dots,8$ successively for the first 
eight congruence, then $q=1$ and $r=1,2,\dots,8$ successively for the following 
eight congruences, and so on. We prove  
\begin{lemma}\label{lem:0}
There exist increasing sequences of nonnegative integers $\{x_m\}_{m\geq0}$ 
and $\{y_m\}_{m\geq0}$ with $x_0=y_0=0$ such that any solution $X$ 
of the system of simultaneous congruences \eqref{mod4} and \eqref{cong1} 
fulfills the following two conditions.
\begin{enumerate}
\renewcommand{\labelenumi}{{\upshape($C_\arabic{enumi}$)}}
\item 
For $1\le m\le 8k-8$, $X+m=X+8q+r$ is divisible by 
exactly $2^q$ integers $u_n$ with $n\le x_m$ 
and exactly $2^q$ integers $v_n$ with $n\le y_m$. 
\item 
For $8k-7\le m\le 8k-4$, $X+m=8(k-1)+r$ is divisible 
by exactly $k^r2^{k-1}$ integers $u_n$ with $n\le x_m$ 
and exactly $2^{k-1}$ integers $v_n$ with $n\le y_m$.
\end{enumerate}
\end{lemma}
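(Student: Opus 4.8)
The plan is to construct $\{x_m\}_{m\ge0}$ and $\{y_m\}_{m\ge0}$ by a single forward recursion on $m$, handling the $u$-blocks and the $v$-blocks in parallel but independently: the $m$-th congruence in \eqref{cong1} constrains a block of $u$-indices and a block of $v$-indices which, by $(H_1)$, are disjoint from the blocks used by all the other congruences, so there is no interaction, and since $\mathcal{E}_1$ and $\mathcal{E}_3$ are infinite there are always enough $u_n$ and $v_n$ available. The first thing to record is the rigidity fact that makes the phrase ``any solution $X$'' meaningful: the moduli in \eqref{mod4}--\eqref{cong1} are pairwise coprime, so the system is solvable by the Chinese Remainder Theorem, and if $x_{m-1}<n\le x_m$ then $u_n$ divides the modulus of the $m$-th congruence, so \emph{every} solution satisfies $X\equiv-m\pmod{u_n}$. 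Writing $m(n)$ for this unique $m$, we obtain $u_n\mid X+j\iff u_n\mid j-m(n)$ for all integers $j$, and likewise for the $v_n$; in particular the divisibility data occurring in $(C_1)$ and $(C_2)$ does not depend on the chosen solution.

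Next I would fix the target counts. Put $c(m):=2^{\lfloor(m-1)/8\rfloor}$ for $1\le m\le 8k-8$ and $c(m):=k^{r}2^{k-1}$ for $8k-7\le m\le 8k-4$ with $r=m-8(k-1)$; by \eqref{8qr} this is exactly the number of $u_n$ required in $(C_1)$, resp.\ $(C_2)$. Let $\tilde c(m)$ be the corresponding target for the $v_n$, so $\tilde c(m)=c(m)$ for $m\le 8k-8$ and $\tilde c(m)=2^{k-1}$ for $8k-7\le m\le 8k-4$. Once $x_0=0<x_1<\dots<x_{m-1}$ have been chosen, the number of $n\le x_m$ with $u_n\mid X+m$ equals $(x_m-x_{m-1})+A(m)$, where the ``accidental'' count $A(m):=\#\{n\le x_{m-1}:u_n\mid m-m(n)\}$ is already determined. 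So I would simply \emph{set} $x_m:=x_{m-1}+c(m)-A(m)$ for $1\le m\le 8k-4$ and $x_m:=x_{m-1}+1$ for $m>8k-4$, and dually $y_m:=y_{m-1}+\tilde c(m)-B(m)$ with $B(m)$ the analogous accidental count for the $v_n$. Provided all the increments are positive, every solution $X$ then has exactly $c(m)$ multiples among $\{u_n:n\le x_m\}$ and exactly $\tilde c(m)$ among $\{v_n:n\le y_m\}$, which is precisely $(C_1)$ and $(C_2)$.

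Everything thus reduces to showing the increments are positive, i.e.\ $A(m)<c(m)$ and $B(m)<\tilde c(m)$; this is the one genuinely substantive step, and I expect it to be the main obstacle. For $m\le 65$ it is immediate, since then $m-m(n)\le 64<u_n$ forces $A(m)=B(m)=0$. For $m\ge 66$ the size assumption \eqref{n_l} is exactly what makes it work: if $u_n\mid m-m(n)$ then $m-m(n)\ge u_n>64$, hence $m(n)\le m-65$, hence $n\le x_{m-65}$ (taking $x_j:=0$ for $j\le0$). Therefore $A(m)\le x_{m-65}=\sum_{j=1}^{m-65}\bigl(c(j)-A(j)\bigr)\le\sum_{j=1}^{m-65}c(j)$, and since $m-65<8k-7$ this sum runs only over the ``small'' values $c(j)=2^{\lfloor(j-1)/8\rfloor}$, for which the crude geometric bound $\sum_{j=1}^{J}c(j)<16\cdot 2^{\lfloor(J-1)/8\rfloor}$ holds. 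As $\lfloor(m-1)/8\rfloor-\lfloor(m-66)/8\rfloor\ge 8$, this yields $A(m)<2^{\lfloor(m-1)/8\rfloor}\le c(m)$, and the bound for $B(m)$ is identical. The one place I anticipate having to keep careful track is the transition from $m=8k-8$ (target $2^{k-2}$) to $m=8k-7,\dots,8k-4$ (targets $k2^{k-1},\dots,k^{4}2^{k-1}$): but there too $m-65<8k-7$, so $A(m)$ still only sees the earlier, exponentially smaller blocks, and the same estimate goes through.
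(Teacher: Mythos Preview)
Your proof is correct and follows essentially the same route as the paper's: recursively set $x_m-x_{m-1}$ equal to the target count minus the number $s_m$ of ``accidental'' earlier divisors, and use \eqref{n_l} to bound $s_m\le x_{m-65}$, which is controlled by the geometric growth $x_m<2^{q+4}$. Your write-up is in fact slightly more careful than the paper's in two respects---you make explicit the CRT rigidity that renders $A(m)$ independent of the choice of solution $X$, and you extend the sequences beyond $m=8k-4$ to honor the ``$\{x_m\}_{m\ge0}$'' in the statement---but the substance is identical.
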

\begin{proof}
We only give the details for $x_m$, since 
the same applies for $y_m$. 
For the first eight congruences, where $q=0$, 
we can take $x_1=1$, $x_2=x_1+1=2,\dots,x_8=x_7+1=8$, 
since $u_n>64$ for every 
$n\geq1$ by \eqref{n_l}.
Similarly, for the next eight, where $q=1$, we can take $x_9=x_8+2=10$, $x_{10}=x_9+2=12,\dots,x_{16}=x_{15}+2=24$. 
We can go on this way as long as $X+m$ is not a multiple of some $u_n$ 
which has already been used in the previous congruences, which is the case 
when $m\le 64$ (that is $q\le 7$). 
To be precise, the values of $x_m$ for $0\le q\le 7$ are given by $x_0=0$ and 
\[
x_m-x_{m-1}=2^q\qquad (1\le m\le 64).
\]
When $64<m\le 8k-8$ (that is $8\le q\le k-2$), we can not take all the following 
$u_n$ by this pattern, since $X+m$ can be divisible by some $u_n$ 
used in the previous congruences. 
In this case, we have to use the formula
\begin{equation}\label{x1}
x_m-x_{m-1}=2^q-s_m\qquad (64<m\le 8k-8),
\end{equation}
where $s_m$ denotes the number of $u_n$ with $1\le n\le x_{m-1}$ such that 
$X+m\equiv 0$ (mod $u_n$). 
We have to check that this formula defines an increasing sequence, that is 
that $s_m<2^q$. 
For this, we observe that, by definition \eqref{x1}, 
\begin{equation}\label{x2}
x_m\le 8(2^q+2^{q-1}+\cdots+2+1)<2^{q+4},
\end{equation} 
whence it follows from \eqref{n_l} and \eqref{x2} that $s_m\le x_{m-64}<2^{(q-8)+4}<2^q$. 
Therefore, $x_m$ defined by \eqref{x1} is increasing. 
Finally, when $8k-8<m\le 8k-4$ (that is when $q=k-1$ and $1\le r\le 4$), 
we use the formula
\begin{equation}\label{x3}
x_m-x_{m-1}=k^r2^{k-1}-s_m\qquad (8k-8<m\le 8k-4).
\end{equation}
For the integers $y_m$, we will have similarly 
\begin{align}
y_m-y_{m-1}&=2^q\qquad (1\le m\le 64),\nonumber\\
y_m-y_{m-1}&=2^q-t_m\qquad (64<m\le 8k-4),\label{y1}
\end{align}
where $t_m$ denotes the number of $v_n$ with $1\le n\le y_{m-1}$ such that 
$X+m\equiv0$ (mod $v_n$), and the proof of Lemma~\ref{lem:0} is 
completed.  
\end{proof}
By definitions \eqref{x3} and \eqref{y1}, we obtain   
\begin{equation}\label{8k-4}
x_{8k-4}>k^42^{k-1},\qquad y_{8k-4}>2^{k-1},
\end{equation}
since $s_m\le x_{m-64}<x_{m-1}$ and $t_m\le y_{m-64}<y_{m-1}$. 
Since the odd integers $u_n$ and $v_n$ are relatively prime, by the Chinese remainder theorem,
there exists a unique integer solution $\eta_k$ with $0\le \eta_k< A_k$
of the simultaneous congruences \eqref{mod4} and \eqref{cong1}, 
where 
\begin{equation}\label{def:Ak}
A_k:=4\prod_{n=1}^{x_{8k-4}}u_{n}\prod_{n=1}^{y_{8k-4}}v_n.
\end{equation}
Let $\mu_k$ be a positive integer defined by 
$n_{\mu_k}:=\min\{
u_{x_{8k-4}},v_{y_{8k-4}}
\}$ and 
\[
\delta_k:=
\exp\left(-16k\sum_{\ell>{{\mu_k}}}^{}\frac{1}{n_\ell}\right).
\]
Note that $\delta_k(<1)$ is well-defined by the condition $(H_2)$. 
Now we choose the least positive integer $\nu_k$ satisfying  
\begin{equation}\label{def:nuk}
\nu_k\ge\frac{128kA_k}{\delta_k}\quad\text{and}\quad 
\sum_{\ell>\nu_k}^{}\frac{1}{n_\ell}<\frac{\delta_k}{32k},
\end{equation}
which is possible, since $\sum_{\ell=1}^{\infty}1/n_\ell<\infty$ by 
the condition $(H_2)$. 
We divide the set \eqref{E_decomp} into the three sets as follows;
\[
\{n_\ell\mid \ell=1,2,\dots\}=\bigcup_{i=1}^{3}\mathcal{F}_i(k),
\]
where the sets $\mathcal{F}_i(k)$ $(i=1,2,3)$ are defined by 
\begin{align}
\mathcal{F}_1(k)&:=\left\{ u_{n},v_{n}\mid u_n\le u_{x_{8k-4}}, v_n\le v_{y_{8k-4}}\right\},\label{F_1(k)}\\
\mathcal{F}_2(k)&:=\left\{ u_{n},v_{n}\mid u_{x_{8k-4}}<u_{n}\leq n_{\nu _{k}}, v_{y_{8k-4}}<v_{n}\leq n_{\nu _{k}}\right\},\label{F_2(k)}\\
\mathcal{F}_3(k)&:=\left\{ u_{n},v_{n}\mid n_{\nu _{k}}<u_n, n_{\nu _{k}}\le v_n\right\}.\label{F_3(k)}
\end{align}
Clearly, the sets $\mathcal{F}_1(k)$ and $\mathcal{F}_3(k)$ are nonempty. 
Moreover, so is $\mathcal{F}_2(k)$, since we have by \eqref{def:Ak} and \eqref{def:nuk} 
\[
u_{x_{8k-4}},v_{y_{8k-4}} <A_{k}<\nu_k\leq n_{\nu _{k}},
\]
so that $n_{\nu_k}\in\mathcal{F}_2(k)$. Define 
\begin{equation}\label{def:Bk}
B_{k}:=\prod_{n_{\ell }\in \mathcal{F}_2(k)}n_{\ell }=\frac{4}{A_{k}}
\prod_{\ell =1}^{\nu _{k}}n_{\ell }.
\end{equation}
Then by definition \eqref{def:Bk} and the first property in (\ref{def:nuk}) we have 
\begin{equation}\label{AB_bound}
A_kB_k=4\prod_{\ell=1}^{\nu_k}n_\ell
\ge3^{\nu_k}\ge\exp(\nu_k)
\ge\exp\left(\frac{128kA_k}{\delta_k}\right),
\end{equation}
which implies particularly that 
\begin{equation}\label{B_bound}
B_k\geq \frac{1}{A_k}\exp\left(128kA_k\right)\geq 128k.
\end{equation} 
Let $\mathcal{G}(k)$ be the set of the $B_k$ positive integers 
\begin{equation}\label{defG_k}
\mathcal{G}(k):=\{\gamma_{i}:=A_ki+\eta_k\mid i=1,2,\dots,B_k\}. 
\end{equation}
\begin{lemma}\label{lem:1}
Let $b_j(n)$ $(j=1,2,3,4)$ be the integer sequences defined in \eqref{bs}. 
For every $m=1,2,\dots,8k-4$, let $q$ and $r$ be defined by \eqref{8qr}. 
Then the set 
$\mathcal{G}(k)$ contains at least $\delta_kB_k/2$
integers $\gamma$ such that 
\begin{align}\label{b_values1}
b_j(\gamma+m)&=
\left\{
\begin{array}{rl}
0&\quad \text{\rm if} \,\,\,r\not\equiv j\pmod{2},\\
k^r2^{k-1}&\quad \text{\rm if} \,\,\,q=k-1,\,\,\,r\equiv j\pmod{4},\\
2^{q}&\quad {\rm otherwise}\\
\end{array}
\right.
\end{align}
for any $m=8q+r=1,2,\dots,8k-4$ and for any $j=1,2,3,4$.
\end{lemma}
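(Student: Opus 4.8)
The plan is to start from the solution $\eta_k$ of the congruence system \eqref{mod4}--\eqref{cong1} and to understand precisely which divisors $n_\ell$ of a generic element $\gamma = A_k i + \eta_k$ of $\mathcal{G}(k)$, and of its small translates $\gamma+m$ ($1\le m\le 8k-4$), can occur. Since $A_k$ is divisible by $4$ and by every $u_n$ with $n\le x_{8k-4}$ and every $v_n$ with $n\le y_{8k-4}$, every $\gamma\in\mathcal{G}(k)$ inherits the congruence properties of $\eta_k$ modulo those moduli; thus, by the conditions $(C_1)$ and $(C_2)$ of Lemma~\ref{lem:0}, for each $m$ the number $\gamma+m$ is divisible by exactly $2^q$ of the $u_n$ with $n\le x_{8k-4}$ (respectively $k^r2^{k-1}$ when $q=k-1$ and $r\equiv1\pmod4$) and by exactly $2^q$ (resp.\ $2^{k-1}$) of the $v_n$ with $n\le y_{8k-4}$, while for $m$ of the ``wrong'' parity, i.e.\ $r\not\equiv j\pmod 2$ in the relevant sense, $\gamma+m$ is coprime to $4$ in the way that forces $b_j(\gamma+m)=0$. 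This already produces the ``guaranteed'' count of divisors coming from $\mathcal{F}_1(k)$; the congruence $X\equiv 0\pmod 4$ in \eqref{mod4} is what ties the residue of $\gamma+m$ mod $4$ to $r$, so that the $a_1$-versus-$a_3$ split in \eqref{bs} picks out exactly the $u_n\equiv 1$ divisors or the $v_n\equiv 3$ divisors according to $r\bmod 4$.

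The second, and I expect the harder, step is to \emph{rule out the extra divisors}: one must show that for most $\gamma\in\mathcal{G}(k)$, no $n_\ell > n_{\nu_k}$ (i.e.\ from $\mathcal{F}_3(k)$) divides any of $\gamma+1,\dots,\gamma+(8k-4)$, and that the ``middle'' divisors from $\mathcal{F}_2(k)$ are already forced to be absent because $B_k = \prod_{n_\ell\in\mathcal F_2(k)} n_\ell$ and $\gamma$ ranges over a complete-ish progression of length $B_k$ modulo $A_k$. For the $\mathcal{F}_2(k)$ part: for a fixed $n_\ell\in\mathcal F_2(k)$ and fixed $m$, the congruence $A_k i + \eta_k + m\equiv 0\pmod{n_\ell}$ has (since $\gcd(A_k,n_\ell)=1$) exactly $B_k/n_\ell$ solutions $i$ in $\{1,\dots,B_k\}$; summing over the $8k-4$ values of $m$ and over $n_\ell\in\mathcal F_2(k)$ and using $\sum_{n_\ell\in\mathcal F_2(k)}1/n_\ell\le \sum_{\ell>\,?}1/n_\ell$ together with the bound $\sum_{\ell>\nu_k}1/n_\ell<\delta_k/32k$ from \eqref{def:nuk} — more precisely the bound coming from the definition of $\delta_k$ with the sum starting at $\mu_k$ — one shows the number of ``bad'' $i$ (those for which some $\mathcal F_2(k)$-divisor intrudes) is at most, say, $(8k-4)B_k\cdot\frac{\delta_k}{16k} < \tfrac14 B_k$ or better. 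For the $\mathcal{F}_3(k)$ part one cannot argue modulo $A_k$, so instead one bounds directly: the number of $i\in\{1,\dots,B_k\}$ for which $\gamma_i+m$ is divisible by some $n_\ell>n_{\nu_k}$ is at most $\sum_{n_\ell>n_{\nu_k}}\big(\tfrac{A_k B_k}{n_\ell}+1\big)$, and since $A_k B_k = 4\prod_{\ell\le\nu_k}n_\ell$ while each such $n_\ell$ exceeds $\nu_k\ge 128kA_k/\delta_k$, both the main term (controlled again by the tail of $\sum 1/n_\ell$) and the error term $\#\{\ell>\nu_k:\ n_\ell\le A_kB_k\}$ are $o(B_k)$; the first inequality in \eqref{def:nuk}, $\nu_k\ge 128kA_k/\delta_k$, is exactly what makes the ``$+1$'' terms negligible.

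Putting these together: the number of $\gamma\in\mathcal{G}(k)$ that fail to have the exact divisor structure — equivalently, fail \eqref{b_values1} for some $m$ or some $j$ — is bounded by the sum of the $\mathcal F_2(k)$-bad count and the $\mathcal F_3(k)$-bad count, each of which is at most $\tfrac14(1-\delta_k)B_k$ (after choosing the constants in \eqref{def:nuk} appropriately, which is why those constants were taken as large as $128k$ and $32k$), so that at least $\delta_k B_k/2$ elements remain. For each surviving $\gamma$, the divisors of $\gamma+m$ among $\{n_\ell\}$ are exactly the $\mathcal F_1(k)$-divisors counted by Lemma~\ref{lem:0}, and reading off $a_1(\gamma+m)$ and $a_3(\gamma+m)$ via the mod-$4$ bookkeeping forced by \eqref{mod4} gives precisely the three cases of \eqref{b_values1}: $b_j=0$ when the parity of $r$ is wrong, $b_j=k^r2^{k-1}$ in the exceptional block $q=k-1$ with $r\equiv j\pmod4$, and $b_j=2^q$ otherwise. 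The main obstacle is the careful accounting in the $\mathcal F_3(k)$ estimate, where one must simultaneously control a tail of $\sum 1/n_\ell$ (handled by $\delta_k$) and a count of small-indexed large $n_\ell$ (handled by the lower bound on $\nu_k$); everything else is bookkeeping with the Chinese remainder theorem.
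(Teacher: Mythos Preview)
Your overall three-part structure (handle $\mathcal{F}_1(k)$, $\mathcal{F}_2(k)$, $\mathcal{F}_3(k)$ separately) matches the paper's, and your treatment of $\mathcal{F}_3(k)$ is essentially right, modulo a spurious factor $A_k$ in your bound $\sum(A_kB_k/n_\ell+1)$: since $\gcd(A_k,n_\ell)=1$ for $n_\ell\in\mathcal{F}_3(k)$, the correct count is $\lfloor B_k/n_\ell\rfloor+1$, and the extra $A_k$ would ruin the estimate. But there is a genuine gap in your $\mathcal{F}_2(k)$ step.

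You propose a union bound: the number of bad $\gamma$ is at most $(8k-4)B_k\sum_{d\in\mathcal F_2(k)}1/d$. This sum is bounded by $\sum_{\ell>\mu_k}1/n_\ell = \tfrac{1}{16k}\log(1/\delta_k)$ (not $\delta_k/(16k)$ as you wrote---you conflated $\delta_k$ with the tail sum), so your bad count is roughly $\tfrac12 B_k\log(1/\delta_k)$. The trouble is that $\delta_k$ need \emph{not} be close to $1$. In the super-prime example $n_\ell\sim\ell(\log\ell)^2$, the tail $\sum_{\ell>\mu_k}1/n_\ell$ decays only like $1/\log\mu_k\sim 1/k$, so $16k\cdot(\text{tail})$ stays bounded away from $0$ and $\delta_k$ is a fixed small constant. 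In that regime $\tfrac12 B_k\log(1/\delta_k)\gg B_k$ and your union bound is vacuous; you cannot conclude that even a single $\gamma$ survives.

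The paper avoids this by computing $|\mathcal S(k)|$ \emph{exactly} via inclusion--exclusion. Because every $d\in\mathcal F_2(k)$ exceeds $8k-4$ (so the $8k-4$ shifts give distinct residue classes mod $d$) and $B_k$ is divisible by every product of distinct elements of $\mathcal F_2(k)$, one obtains the closed form
\[
|\mathcal S(k)|=B_k\prod_{d\in\mathcal F_2(k)}\Bigl(1-\tfrac{8k-4}{d}\Bigr).
\]
Taking logarithms and using $\log(1-x)>-2x$ then gives $|\mathcal S(k)|\ge\delta_k B_k$, which is precisely what is needed after subtracting the $\mathcal F_3(k)$ bad count of at most $\tfrac12\delta_k B_k$. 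The multiplicative product structure is essential here; an additive union bound cannot recover the exponentially small $\delta_k$.

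One smaller point: you assert that $\gamma+m$ has exactly $2^q$ divisors among the $u_n$ with $n\le x_{8k-4}$, citing Lemma~\ref{lem:0}. But Lemma~\ref{lem:0} only controls divisors with $n\le x_m$. The paper's ``first step'' separately shows, by a short contradiction argument using the congruences \eqref{cong1}, that no $u_n$ with $x_m<n\le x_{8k-4}$ can divide $\gamma+m$. This is not hard, but it is not automatic from Lemma~\ref{lem:0}.
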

\begin{proof}
For any $\gamma\in\mathcal{G}(k)$, 
the conditions ($C_1$) and ($C_2$) imply that
each integer $\gamma+m$ is divisible by
exactly $2^{q}$ ($k^r2^{k-1}$, if $q=k-1$) integers $u_n$ 
with $n\leq x_m$
and
exactly $2^{q}$ integers $v_n$ with $n\leq y_m$.
Hence, the properties \eqref{b_values1} are satisfied 
if the integer $\gamma+m$ 
is not divisible by any $u_n$ with $n>x_m$ nor any  
$v_n$ with $n>y_m$. 
The proof of Lemma~\ref{lem:1} is proceeded in three steps.

\vspace{0.2cm}

\noindent
{\it First step}. We consider first the integers $u_n$ with $x_m<n\le x_{8k-4}$ 
and the integers $v_n$ with $y_m<n\le y_{8k-4}$, 
which are the elements of $\mathcal{F}_1(k)$ defined in \eqref{F_1(k)}.  
We prove that, for any 
$\gamma\in\mathcal{G}(k)$ and for fixed $m$ with $1\le m\le 8k-4$, we have  
\begin{equation}\label{notdiv}
\left\{
\begin{array}{rll}
u_n\nmid\gamma+m&\quad \text{for}\,\,\,x_m<n\le x_{8k-4},\\
v_n\nmid\gamma+m&\quad \text{for}\,\,\,y_m<n\le y_{8k-4}.
\end{array}
\right.
\end{equation}
Indeed, otherwise there exist for example $\gamma_0\in\mathcal{G}(k)$ and 
the integers 
$m_0,n_0$ with $1\le m_0<8k-4$ and 
$x_{m_0}<n_0\le x_{8k-4}$ such that $u_{n_0}\mid \gamma_0+m_0$. 
Take $m_1>m_0$ such that 
\begin{equation}\label{n_0}
x_{m_1-1}<n_0\le x_{m_1}.
\end{equation}
Then $u_{n_0}\mid\gamma_0+m_1$ by \eqref{cong1}.
Hence, we have $u_{n_0}\mid m_1-m_0$, 
which implies $u_{n_0}<m_1$. 
On the other hand,  by \eqref{n_0} we have 
$u_{n_0}>n_0> x_{m_1-1}\ge m_1-1$. 
Therefore, we obtain $m_1-1<u_{n_0}<m_1$. 
This is impossible, since $u_{n_0}$ is an integer. 
Thus, \eqref{notdiv} is proved. 

\vspace{0.2cm}

\noindent
{\it Second step}.
We consider the integers $u_n$ and $v_n$ such that 
$u_{x_{8k-4}}<u_n\le n_{\nu_k}$ and $v_{y_{8k-4}}<v_n\le n_{\nu_k}$, 
which are the elements of $\mathcal{F}_2(k)$ defined in \eqref{F_2(k)}.  
We estimate the number of elements of the set 
\[
\mathcal{S}(k)
:=
\left\{
\gamma\in \mathcal{G}(k)\,\left|\,
\substack{
\displaystyle\text{The integers $\gamma+1,\gamma+2,\dots,\gamma+8k-4$}\\
\displaystyle\text{
are not divisible by any $u_n$, $v_n$ in $\mathcal{F}_2(k)$
}}
\right\}
\right.
\]
by using the inclusion-exclusion principle. 
For this, let $\mathcal{D}:=\{d_1,d_2,\dots,d_s\}$ 
be a nonempty subset of $\mathcal{F}_2(k)$ 
and $\mathcal{H}_{\mathcal{D}}$ be the 
set of $\gamma\in\mathcal{G}(k)$ 
such that the set of the consecutive integers  
$\{\gamma+1,\gamma+2,\dots,\gamma+8k-4\}$ contains 
multiples of all $d\in\mathcal{D}$. 
Let 
\[
{T}:=\{(t_1,t_2,\dots,t_s)\in\mathbb{Z}^s\mid 1\leq t_j\leq 8k-4,\,\,\,j=1,2,\dots,s\}.
\]
For $\mathbf{t}=(t_1,t_2,\dots,t_s)\in T$, we consider the set 
\[
{\mathcal H}_{\mathcal D}^{(\mathbf{t})}:=
\{\gamma\in\mathcal{H}_{\mathcal{D}}\mid 
\gamma+t_j\equiv0\,\,\,({\rm mod} \,\,d_j),\,\,\,j=1,2,\dots,s\}.
\]
Then we have 
\begin{equation}\label{H}
{\mathcal H}_{\mathcal D}=\displaystyle\bigcup_{\mathbf{t}\in T}
 {\mathcal H}_{\mathcal D}^{(\mathbf{t})},
\end{equation}
\begin{equation}\label{disjoint}
{\mathcal H}_{\mathcal D}^{(\mathbf{t_1})}\cap{\mathcal H}_{\mathcal D}^{(\mathbf{t_2})}=\emptyset \,\,\,\text{for any} \,\,\,
\mathbf{t_1,t_2}\in T \,\,\,\text{with}\,\,\, \mathbf{t_1\neq t_2}.
\end{equation}
It is clear that \eqref{H} follows from definitions of 
${\mathcal H}_{\mathcal D}$ 
and 
${\mathcal H}_{\mathcal D}^{(\mathbf{t})}$.  
To see \eqref{disjoint}, we suppose to the 
contrary that there exists a  
$\gamma\in{\mathcal H}_{\mathcal D}^{(\mathbf{t_1})}\cap
{\mathcal H}_{\mathcal D}^{(\mathbf{t_2})}$ 
for some $\mathbf{t_1,t_2}\in T$ with $\mathbf{t_1\neq t_2}$. 
Let $\mathbf{t_i}:=(t_{i,1},t_{i,2},\dots,t_{i,s})$ $(i=1,2)$. 
Since $\mathbf{t_1\neq t_2}$, there exists 
an integer $j$ such that $t_{1,j}\neq t_{2,j}$ and 
\[
\gamma+t_{1,j}\equiv0, \quad \gamma+t_{2,j}\equiv0\pmod{d_j}. 
\]
Thus, the integer $t_{1,j}-t_{2,j}$ is divisible by $d_j$. 
However by \eqref{8k-4} 
\[
0<|t_{1,j}-t_{2,j}|\le 
8k-4<2^{k-1}<\min\{x_{8k-4},y_{8k-4}\}\leq \min\{u_{x_{8k-4}},v_{y_{8k-4}}\}<d_j.\] 
This is a contradiction. Hence, by \eqref{H} and \eqref{disjoint} we obtain 
\begin{equation}\label{orderH}
|\mathcal{H}_{\mathcal{D}}|=\sum_{\mathbf{t}\in T}^{}|\mathcal{H}_{\mathcal D}^{(\mathbf{t})}|.
\end{equation}
Moreover, since the integers $A_k$ and $d_j$ are coprime, 
we find by the Chinese Remainder Theorem that 
for any given $(t_1,t_2,\dots,t_s)\in T$, there exists an integer~$i_0$ satisfying the $s$ congruences 
\[
\gamma_{i_0}+t_j:=A_ki_0+\eta_k+t_j\equiv 0\pmod{d_j},\quad j=1,2,\dots,s,
\]
where $i_0$ is uniquely determined modulo $d_1d_2\cdots d_s$. 
Thus, for any $\mathbf{t}\in T$ the set ${\mathcal H}_{\mathcal D}^{(\mathbf{t})}$ 
can be rewritten as  
\[
{\mathcal H}_{\mathcal D}^{(\mathbf{t})}=\{\gamma_i\in\mathcal{H}_{\mathcal{D}}\mid 
i\equiv i_0 \,\,(\text{\rm mod}\,\,{d_1d_2\cdots d_s}), \,\,
1\leq i\leq B_k\},
\]
and hence, noting that the integer $B_k$ is divisible by $d_1d_2\cdots d_s$, we obtain   
\begin{equation}\label{H_D}
|{\mathcal H}_{\mathcal D}^{(\mathbf{t})}|=\frac{B_k}{d_1d_2\cdots d_s}.
\end{equation}
Combining \eqref{orderH} and \eqref{H_D} gives 
\[
|\mathcal{H}_{\mathcal{D}}|=\sum_{\mathbf{t}\in T}^{}|\mathcal{H}_{\mathcal D}^{(\mathbf{t})}|=
\frac{B_k}{d_1d_2\cdots d_s}|T|=
(8k-4)^{|\mathcal{D}|}\frac{B_k}{\prod_{d\in \mathcal{D}}d}.
\] 
Therefore, by the inclusion-exclusion principle, we have 
\begin{equation}\label{G_shifted}
|\mathcal{S}(k)|=B_k+\sum_{\mathcal{D}
\subset\mathcal{F}_2(k)}(-1)^{|\mathcal{D}|}|\mathcal{H}_{\mathcal{D}}|
=
B_k\prod_{d\in\mathcal{F}_2(k)}\left(1-\frac{8k-4}{d}\right)
\geq \delta_kB_k,
\end{equation}
where we used 
\begin{align*}\label{log:d}
\log \prod_{d\in\mathcal{F}_2(k)}
\left(1-\frac{8k-4}{d}\right)
&=
\sum_{d\in\mathcal{F}_2(k)}
\log\left(1-\frac{8k-4}{d}\right)\\
{}&>
-2\sum_{d\in\mathcal{F}_2(k)}^{}\frac{8k-4}{d}
\ge
-16k\sum_{\ell>{{\mu_k}}}^{}\frac{1}{n_\ell}
=\log\delta_k,\nonumber
\end{align*}
since $\log(1-x)> -2x$ holds for  sufficiently small $x>0$.  


\vspace{0.2cm}

\noindent
{\it Third step}.
We consider here the integers $u_n>n_{\nu_k}$ and $v_n>n_{\nu_k}$, 
which are the elements of $\mathcal{F}_3(k)$ defined in \eqref{F_3(k)}.  
For a fixed integer $t$ $(1\leq t\leq 8k-4)$,  
the number of $\gamma=A_ki+\eta_k\in\mathcal{G}(k)$ satisfying 
\[
n_\ell\mid\gamma+t=A_ki+\eta_k+t,\qquad n_\ell >n_{\nu_k}, 
\]
is at most $\lfloor B_k/{n_\ell}\rfloor+1$. 
Hence, the number of 
integers $\gamma\in \mathcal{G}(k)$ such that at least one of 
the integers $\gamma+1,\gamma+2,\dots,\gamma+8k-4$ is divisible by 
some $n_\ell>n_{\nu_k}$ is at most 
\begin{equation}\label{eq:9}
(8k-4)\cdot {\sum}'\left(\left\lfloor \frac{B_k}{n_\ell}\right\rfloor+1\right),
\end{equation}
where the sum is taken over all integers $n_\ell$ with  
\begin{equation}\label{eq:423}
n_{\nu_k}<n_\ell\le A_k B_k+\eta_k+8k-4\le 2A_kB_k.
\end{equation}
Let $\pi(x)$ denote the number of primes $p\leq x$. 
Clearly, the number of the integers 
$n_\ell$ satisfying (\ref{eq:423}) is less than   
\[
\pi(2A_kB_k)
< 2\frac{2A_kB_k}{\log (2A_kB_k)}
\le \frac{\delta_k}{32k}B_k,
\]
where we used (\ref{AB_bound}) and the Prime Number Theorem. 
Thus, the sum in (\ref{eq:9}) is taken over 
at most ${\delta_k}B_k/(32k)$ integers $n_\ell$, and by the second property in (\ref{def:nuk})  
\begin{equation}\label{eq:496}
\displaystyle{\sum}'\left(\left\lfloor \frac{B_k}{n_\ell}\right\rfloor+1\right)
\le
\mathop{\displaystyle{\sum}'}1+B_k\displaystyle\sum_{\ell>\nu_k}\frac{1}{n_\ell}
\le
\frac{\delta_k}{16k}B_k.
\end{equation}
Hence, by (\ref{eq:9}) and (\ref{eq:496}) the number of 
integers $\gamma\in \mathcal{G}(k)$ such that at least one of 
the integers $\gamma+1,\gamma+2,\dots,\gamma+8k-4$ is divisible by 
some $n_\ell>n_{\nu_k}$ is at most 
\begin{equation}\label{eq:858}
(8k-4)\cdot{\sum}'\left(\left\lfloor\frac{B_k}{n_\ell}\right\rfloor+1\right)
\le\frac{1}{2}\delta_k B_k.
\end{equation}

Therefore, combining \eqref{notdiv}, \eqref{G_shifted}, and \eqref{eq:858}, we find that 
the number of $\gamma\in \mathcal{G}(k)$ 
such that each integer 
$\gamma+m$ 
is not divisible by any $u_n$ with $n>x_m$ 
and $v_n$ with $n>y_m$ 
is at least $\delta_kB_k/2$ integers.
The proof of  Lemma~\ref{lem:1} is completed.
\end{proof}
\begin{lemma}\label{lem:3} 
Let $\xi>1$ be an arbitrary constant and assume that $k$ is sufficiently large 
depending on $\xi$.
Then there exist at least $(1-\delta_k/4)B_k$ integers $\gamma\in \mathcal{G}(k)$ such that 
\begin{equation}\label{eq:103}
b_j(\gamma+4k+i)< \xi^{|i|}
\end{equation}
for any integer $j=1,2,3,4$ and for any integer $i$ with 
$-\gamma-4k<i\leq -4k$ or $i\geq4k-3$. 
\end{lemma}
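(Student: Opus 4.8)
The plan is to estimate, for each relevant shift $i$, how many $\gamma\in\mathcal{G}(k)$ can \emph{fail} the bound $b_j(\gamma+4k+i)<\xi^{|i|}$, and then to sum these counts over all $i$ in the two ranges, showing the total is less than $(\delta_k/4)B_k$. Recall from \eqref{bs} that $b_j(n)\le a_1(n)+a_3(n)$ equals the number of $n_\ell$ dividing $n$; so if $b_j(\gamma+4k+i)\ge \xi^{|i|}$ then $\gamma+4k+i$ has at least $\xi^{|i|}$ divisors from the sequence $\{n_\ell\}$, hence (since the $n_\ell$ are pairwise coprime and each exceeds $64$) the product of those $n_\ell$ divides $\gamma+4k+i$ and is at least $64^{\xi^{|i|}}$. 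On the other hand $\gamma+4k+i \le A_kB_k + \eta_k + 8k - 4 \le 2A_kB_k$ for all $i$ in our ranges, so a single integer $\gamma+4k+i$ can carry at most $\log(2A_kB_k)/\log 64$ prime-type divisors from the sequence, which for large $k$ forces $\xi^{|i|}$ to be comparable to $\log(2A_kB_k)$; in particular the bound is automatic unless $|i|$ is already rather large, which is what makes the tail sum converge.

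First I would fix $i$ with $i\ge 4k-3$ (the case $-\gamma-4k<i\le -4k$ is symmetric, writing $n=\gamma+4k+i$ with $1\le n\le \gamma$). For a fixed $i$, the integers $\gamma=A_k\ell+\eta_k$ with $\gamma+4k+i$ divisible by a given $n_\ell$ number at most $\lfloor B_k/n_\ell\rfloor + 1$ among $\ell=1,\dots,B_k$, exactly as in the third step of the proof of Lemma~\ref{lem:1}. If $b_j(\gamma+4k+i)\ge \xi^{|i|}$, then in particular $\gamma+4k+i$ is divisible by some $n_\ell$ with $n_\ell \ge 64^{\lceil\xi^{|i|}\rceil/\omega}$ for an appropriate count $\omega$; more simply, it is divisible by a product of at least $\xi^{|i|}$ coprime terms each $>64$, so by at least one term of size $\ge 64$, but to get a usable bound I would instead argue: the number of $\gamma$ with $b_j(\gamma+4k+i)\ge \xi^{|i|}$ is at most the number of $\gamma\le A_kB_k$ for which $\gamma+4k+i$ is a multiple of some integer $\ge 64^{\xi^{|i|}}$ that itself divides $2A_kB_k$; using $\sum_{n_\ell}1/n_\ell$ small on the relevant tail together with $A_kB_k=4\prod_{\ell\le\nu_k}n_\ell\ge\exp(128kA_k/\delta_k)$ from \eqref{AB_bound}, this count is bounded by something like $\xi^{-|i|}B_k$ times a constant, which is summable.

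The cleanest route is probably the following. For each $i$ in the range $i\ge 4k-3$, let $N_i$ be the number of $\gamma\in\mathcal{G}(k)$ with $b_j(\gamma+4k+i)\ge\xi^{|i|}$ for some $j$. Because $\gamma+4k+i\le 2A_kB_k$, the bound $b_j\ge\xi^{|i|}$ is vacuous once $64^{\xi^{|i|}}>2A_kB_k$, i.e.\ once $|i|>C_1\log\log(A_kB_k)$ for an absolute constant $C_1$; for smaller $|i|$, counting multiples as above gives $N_i \le (\text{number of admissible divisors } d\ge 64^{\xi^{|i|}})\cdot(B_k/64^{\xi^{|i|}}+1)$, and the number of such divisors is at most $\pi(2A_kB_k)\le 4A_kB_k/\log(2A_kB_k) \le (\delta_k/8k)B_k$ by the Prime Number Theorem and \eqref{AB_bound}, just as in \eqref{eq:858}. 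Summing $N_i$ over $4k-3\le i \le C_1\log\log(A_kB_k)$ (and the symmetric range) and using $\xi>1$ so that $\sum_{|i|\ge 4k-3}\xi^{-|i|}\to 0$ as $k\to\infty$, I would get $\sum_i N_i < (\delta_k/4)B_k$ for $k$ large enough depending on $\xi$. Hence at least $(1-\delta_k/4)B_k$ of the $\gamma$ satisfy \eqref{eq:103} simultaneously for all admissible $i$ and all $j$.

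The main obstacle is bookkeeping rather than conceptual: one must ensure that the \emph{same} set of $\gamma$ works simultaneously for every shift $i$ in both (long) ranges and every $j$, so the failure counts must be summed over infinitely many $i$ near the left end (those with $-\gamma-4k<i\le-4k$, i.e.\ $n$ ranging over $1,\dots,\gamma$) — here the geometric decay $\xi^{-|i|}$ is exactly what rescues convergence, but one has to be careful that for the smallest values of $|i|$ (where $\xi^{|i|}$ is close to $1$) the bound $b_j<\xi^{|i|}$ is still a genuine constraint, possibly requiring $b_j(\gamma+4k+i)=0$; absorbing those few near-boundary shifts into the count, and checking that the total stays below $(\delta_k/4)B_k$, is the delicate part. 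The interplay between "$|i|$ small, constraint tight, few integers involved" and "$|i|$ large, many integers but constraint trivially satisfied" is what the argument must balance.
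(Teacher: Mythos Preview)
Your counting step has a genuine gap. You write that for a fixed $i$ the number of $\gamma\in\mathcal{G}(k)$ with $n_\ell\mid\gamma+4k+i$ is at most $\lfloor B_k/n_\ell\rfloor+1$, ``exactly as in the third step of the proof of Lemma~\ref{lem:1}''. But that third step treats only $n_\ell\in\mathcal{F}_3(k)$, which are coprime to $A_k$. For $n_\ell\in\mathcal{F}_1(k)$ one has $n_\ell\mid A_k$, so every $\gamma=A_ki+\eta_k$ satisfies $\gamma\equiv\eta_k\pmod{n_\ell}$; hence either \emph{all} $B_k$ elements of $\mathcal{G}(k)$ have $n_\ell\mid\gamma+4k+i$, or none do. Your averaging bound $N_i\lesssim \xi^{-|i|}B_k$ therefore breaks down precisely for these small divisors, and your ``large product'' route does not rescue it: the relevant divisor $d\ge 64^{\xi^{|i|}}$ is a \emph{product} of $n_\ell$'s, and the number of such products below $2A_kB_k$ is not bounded by $\pi(2A_kB_k)$ but is exponential, so the union bound you sketch is far too weak.

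The paper resolves this by splitting $b_j(\gamma+4k+i)\le\lambda_1+\lambda_2$, where $\lambda_1$ counts divisors $n_\ell\in\mathcal{F}_1(k)$ and $\lambda_2$ the rest. The crucial observation you are missing is that $\lambda_1(\gamma+4k+i)<\xi^{|i|}/2$ holds for \emph{every} $\gamma\in\mathcal{G}(k)$: by the congruences \eqref{cong1} any $n_\ell\in\mathcal{F}_1(k)$ dividing $\gamma+4k+i$ also divides $\gamma+h$ for some $1\le h\le 8k-4$, hence divides the nonzero integer $4k+i-h$, giving $n_\ell\le |i|+12k\le 5|i|$; so $\lambda_1\le 5|i|<\xi^{|i|}/2$. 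Only $\lambda_2$ is handled by an averaging (first-moment/Markov) argument: one shows $\sum_{\gamma\in\mathcal{G}(k)}\lambda_2(\gamma+4k+i)\le B_k$, whence at most $2B_k\xi^{-|i|}$ values of $\gamma$ have $\lambda_2\ge\xi^{|i|}/2$, and summing the geometric series in $|i|\ge 4k-3$ gives the $(\delta_k/4)B_k$ bound. Without isolating $\mathcal{F}_1(k)$ and exploiting the congruence structure for those divisors, the argument cannot close. (A minor additional slip: the bound $\gamma+4k+i\le 2A_kB_k$ fails for large $i\ge 4k-3$; the paper treats $i>A_kB_k/2$ separately by the trivial estimate $n_\ell\le\gamma+4k+i<5i<\xi^{i}/2$.)
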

\begin{proof} 
By definition \eqref{bs} we have 
\begin{equation}\label{eq:15}
b_j(\gamma+4k+i)
\le\sum_{n_\ell\mid \gamma+4k+i}^{}1=\lambda_1( \gamma+4k+i)+\lambda_2( \gamma+4k+i)
\end{equation}
for any integer $i$ and for any $j=1,2,3,4$, where
\[
\lambda_1(\gamma+4k+i)
:=
\sum_{
\substack{n_\ell\mid \gamma+4k+i\\
n_\ell\in\mathcal{F}_1(k)}
}1,\qquad
\lambda_2(\gamma+4k+i)
:=
\sum_{
\substack{n_\ell\mid \gamma+4k+i\\
n_\ell\notin\mathcal{F}_1(k)}
}1,
\]
and $\mathcal{F}_1(k)$ is defined by \eqref{F_1(k)}. 
Let $\xi>1$. 
We first show  
\begin{equation}\label{eq:11}
\lambda_1(\gamma+4k+i)<\xi^{|i|}/2
\end{equation}
for any $\gamma\in \mathcal{G}(k)$ and for any integer $i$ with  $-\gamma-4k<i\leq -4k$ or
$i\geq 4k-3$.  
Assume that $n_\ell\mid \gamma+4k+i$ with $n_\ell\in\mathcal{F}_1(k)$. 
By the congruences \eqref{cong1},
we have $n_\ell\mid\gamma+h$ for some integer $h$ with $1\le h \le 8k-4$,  
from which it follows $n_\ell\mid 4k+i-h$. 
Then we have $4k+i-h\neq0$, so that  
\[
n_\ell\leq |4k+i-h|\leq |i|+12k\leq5|i|\leq \xi^{|i|}/2,
\]
since $|i|\geq3k$ is sufficiently large.
Thus, we obtain (\ref{eq:11}). 
Moreover, if  $i> A_kB_k/2$, then  
\begin{equation}\label{eq:097}
\lambda_2(\gamma+4k+i)<\xi^i/2
\end{equation}
holds for any $\gamma\in\mathcal{G}(k)$, since by \eqref{B_bound} and \eqref{defG_k}
\begin{align*}
n_\ell&\le\gamma+4k+i\le A_kB_k+\eta_k+4k+i\le A_kB_k+A_k+4k+i\\
{}&\leq A_kB_k+4kA_k+i\leq 2A_kB_k+i<5i< \xi^i/2. 
\end{align*}
Now we estimate the number of $\gamma\in\mathcal{G}(k)$ such that
\begin{equation}\label{eq:12}
\lambda_2(\gamma+4k+i)\geq \xi^{|i|}/2
\end{equation} 
holds for some $i$ with $-\gamma-4k<i\leq -4k$ or $4k-3\leq  i \leq A_kB_k/2$. 
Fix an integer $i$ and let $N_i$ denote the number of $\gamma\in\mathcal{G}(k)$ satisfying (\ref{eq:12}). 
Note that   
\[
1\leq \gamma+4k+i\leq A_kB_k+\eta_k+4k+A_kB_k/2<2A_kB_k
\]
for any $\gamma\in \mathcal{G}(k)$. 
Then, by the same argument used at the third step in proof of Lemma~\ref{lem:1}, 
we obtain
\begin{align*}
\frac{\xi^{|i|}}{2}N_{i}
&\le\sum_{\gamma\in\mathcal{G}(k)}\lambda_2(\gamma+4k+i)
=\sum_{\gamma\in\mathcal{G}(k)}\sum_{\substack{n_\ell\mid\gamma+4k+i\\n_{\ell}\notin\mathcal{F}_1(k)}}1
\le 
{\sum}''\left(\left\lfloor\frac{B_k}{n_\ell}\right\rfloor+1\right)\\
&\le\pi(2A_kB_k)+B_k\sum_{n_\ell\notin\mathcal{F}_1(k)}\frac{1}{n_\ell}
\le \frac{\delta_k}{32k}B_k+\frac{1}{2}B_k\leq B_k,
\end{align*}
where the sum ${\sum}''$ is taken over all integers $n_\ell$ with 
$n_\ell\notin \mathcal{F}_1(k)$ and $n_\ell<2A_kB_k$.
Thus, we have $N_{i}\leq 2B_k\xi^{-|i|}$ for each $i$, and hence the number of $\gamma\in\mathcal{G}(k)$
such that (\ref{eq:12}) holds  for some $i$ is at most   
\begin{align*}
\sum_{-\gamma-4k<i\leq -4k}N_{i}+\sum_{4k-3<i\leq A_kB_k/2}^{}N_{i}
&\le4B_k\sum_{i=4k-3}^{\infty}\xi^{-i}=\frac{4\xi^3}{1-\xi^{-1}}B_k\xi^{-4k}\\
&<\frac{B_k}{4}\cdot\exp\left(-16k\sum_{\ell>\mu_k}\frac{1}{n_\ell}\right)=
\frac{B_k}{4}\delta_k.
\end{align*}
Combining \eqref{eq:097} and the above result shows that 
there exist at least $(1-\delta_k/4)B_k$ integers $\gamma\in\mathcal{G}(k)$ such that 
\begin{equation}\label{eq:9382}
\lambda_2(\gamma+4k+i)< \xi^{|i|}/2
\end{equation}
holds for any integer $i$ with  $-\gamma-4k<i\leq -4k$ or $i\geq 4k-3$. 
Therefore, Lemma~\ref{lem:3} follows by \eqref{eq:15}, \eqref{eq:11}, and \eqref{eq:9382}. 
\end{proof}
\subsection{The case where one of $\mathcal{E}_1$ and $\mathcal{E}_3$ is finite}\label{subsec2.2}
Since one of $\mathcal{E}_1$ and $\mathcal{E}_3$ is finite, 
as mentioned at the beginning of Section~\ref{sec:2}, 
we may assume that $n_\ell\equiv1$ (mod $4$) for every $\ell\ge1$ 
or $n_{\ell}\equiv 3$ (mod $4$) for every $\ell\geq1$. 
In any case,  the set of the sequences $b_j(n)$ $(j=1,2,3,4)$ in \eqref{bs} 
coincides with the set of the sequences   
\begin{equation}\label{cs}
c_j(n):=\left\{
\begin{array}{cl}
a(n)&n\equiv j\pmod{4},\\
0&\text{otherwise}
\end{array}
\right.
\end{equation}
for $j=1,2,3,4$, where $a(n)$ denotes the number of divisors $n_\ell$ of $n$. 
Then, in the same way as in $\S~\ref{subsec2.1}$, we consider the system of the $8k-3$ simultaneous congruences 
\eqref{mod4} and \eqref{cong1} with $u_i:=n_i$ $(i\geq1)$ 
(the integers $v_n$ are not used in the congruences), and find that 
there exists a unique integer solution $\eta_k$
of this system of the simultaneous congruences, where 
$0\le\eta_k<A_k:=
4u_1u_2\cdots u_{x_{8k-4}}$.
Using these integers $A_k$ and $\eta_k$,
we define the numbers  $\mu_k:=x_{8k-4}$, $\delta_k$, $\nu_k$, $B_k$ and $\mathcal{G}(k)$
in exactly the same way as in Subsection~\ref{subsec2.1}. 
Under this situation, similarly to the proof of Lemmas~\ref{lem:1} and \ref{lem:3}, we can obtain the following lemmas.
\begin{lemma}\label{lem:2}
Let $c_j(n)$ $(j=1,2,3,4)$ be the sequences defined in \eqref{cs}. 
For every $m=1,2,\dots,8k-4$, let $q$ and $r$ be defined by \eqref{8qr}. 
Then the set $\mathcal{G}(k)$ contains at least $\delta_kB_k/2$
integers $\gamma$ such that 
\begin{align}\label{b_values2}
c_j(\gamma+m)&=
\left\{
\begin{array}{rl}
0&\quad \text{\rm if}\,\,\,r\not\equiv j\pmod{4},\\
k^r2^{k-1}&\quad \text{\rm if}\,\,\, q=k-1,\,\,\,r\equiv j\pmod{4},\\
2^{q}&\quad \text{\rm otherwise}
\end{array}
\right.
\end{align}
for any $m=8q+r=1,2,\dots,8k-4$ and for any $j=1,2,3,4$. 
\end{lemma}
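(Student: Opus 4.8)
The plan is to follow the proof of Lemma~\ref{lem:1} almost verbatim, simply deleting the auxiliary sequence $\{v_n\}$, which in the present situation does not occur in the congruences \eqref{cong1}. First I would record the analogue of Lemma~\ref{lem:0} for the reduced system \eqref{mod4}, \eqref{cong1} with $u_i=n_i$: the construction of the integers $x_m$ carried out in the proof of Lemma~\ref{lem:0} applies word for word (it only used \eqref{n_l}), so any solution $X=\gamma$ of this system is divisible, for $1\le m\le 8k-8$, by exactly $2^q$ integers $u_n$ with $n\le x_m$, and, for $8k-7\le m\le 8k-4$, by exactly $k^r2^{k-1}$ such integers, where $m=8q+r$ as in \eqref{8qr}; correspondingly \eqref{8k-4} is replaced by $x_{8k-4}>k^42^{k-1}$, and the quantities $\mu_k,\delta_k,\nu_k,B_k,\mathcal{F}_i(k),\mathcal{G}(k)$ are those of Subsection~\ref{subsec2.1} with all references to the $v_n$ suppressed.

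Next I would reduce \eqref{b_values2} to a statement about divisors alone. Because $4\mid A_k$ and, by \eqref{mod4}, $4\mid\eta_k$, every $\gamma\in\mathcal{G}(k)$ satisfies $\gamma\equiv0\pmod4$, hence $\gamma+m\equiv r\pmod4$; therefore $c_j(\gamma+m)=0$ automatically whenever $r\not\equiv j\pmod4$, while for $r\equiv j\pmod4$ one has $c_j(\gamma+m)=a(\gamma+m)$, the number of divisors $n_\ell$ of $\gamma+m$. Thus it is enough to produce at least $\delta_kB_k/2$ integers $\gamma\in\mathcal{G}(k)$ with the property that, for every $m=1,\dots,8k-4$, the integer $\gamma+m$ has no divisor $u_n$ with $n>x_m$: for such $\gamma$ the exact divisor count from the analogue of Lemma~\ref{lem:0} gives $a(\gamma+m)=2^q$ (respectively $k^r2^{k-1}$ when $q=k-1$), which is precisely \eqref{b_values2}.

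It remains to establish this last assertion, and here I would rerun the three-step argument from the proof of Lemma~\ref{lem:1} with $\mathcal{F}_1(k),\mathcal{F}_2(k),\mathcal{F}_3(k)$ now consisting only of the relevant $u_n$. The first step excludes the $u_n$ with $x_m<n\le x_{8k-4}$ through the same integrality contradiction $m_1-1<u_{n_0}<m_1$. The second step applies the inclusion--exclusion principle over $\mathcal{F}_2(k)$ to retain at least $B_k\prod_{d\in\mathcal{F}_2(k)}\bigl(1-(8k-4)/d\bigr)\ge\delta_kB_k$ integers $\gamma\in\mathcal{G}(k)$ none of whose shifts $\gamma+1,\dots,\gamma+8k-4$ is divisible by an element of $\mathcal{F}_2(k)$; the disjointness underlying \eqref{disjoint} now needs only $8k-4<2^{k-1}<x_{8k-4}\le u_{x_{8k-4}}$, and $B_k$ is still divisible by every product $d_1\cdots d_s$. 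The third step uses the Prime Number Theorem exactly as in \eqref{eq:858} to discard at most $\tfrac12\delta_kB_k$ of them. Subtracting leaves at least $\delta_kB_k-\tfrac12\delta_kB_k=\tfrac12\delta_kB_k$ admissible $\gamma$, as required. Since every inequality invoked in these three steps (the bounds on $x_m$, the two defining inequalities for $\nu_k$ in \eqref{def:nuk}, and the estimate for $\pi(2A_kB_k)$, which rests on $A_kB_k=4\prod_{\ell=1}^{\nu_k}n_\ell$ just as before) is insensitive to the removal of the $v_n$, no genuinely new difficulty arises; the only point requiring care is the bookkeeping ensuring that the divisor counts inherited from Lemma~\ref{lem:0} are \emph{exact} and not merely upper bounds, which is exactly what the exclusion of the larger $u_n$ in the three steps accomplishes.
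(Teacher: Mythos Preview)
Your proposal is correct and follows essentially the same approach as the paper, which simply states that Lemma~\ref{lem:2} is obtained ``similarly to the proof of Lemma~\ref{lem:1}'' after suppressing the sequence $\{v_n\}$. Your explicit reduction of \eqref{b_values2} to the divisor count $a(\gamma+m)$ via $\gamma\equiv0\pmod4$ and $m\equiv r\pmod4$ makes transparent the one structural difference between the two lemmas; the only slip is the phrase ``any solution $X=\gamma$ of this system is divisible\ldots'', where you of course mean $X+m=\gamma+m$.
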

\begin{lemma}\label{lem:4} 
Let $\xi>1$ be an arbitrary constant and assume that $k$ is sufficiently large 
depending on $\xi$.
Then there exist at least $(1-\delta_k/4)B_k$ integers $\gamma\in \mathcal{G}(k)$ such that 
\begin{equation}\label{eq:1034}
c_j(\gamma+4k+i)< \xi^{|i|}
\end{equation}
for any integer $j=1,2,3,4$ and for any integer $i$ with 
$-\gamma-4k<i\leq -4k$ or $i\geq4k-3$. 
\end{lemma}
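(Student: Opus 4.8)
The plan is to follow the proof of Lemma~\ref{lem:3} essentially verbatim, replacing $b_j$ by $c_j$ throughout and noting that in the present case only the integers $u_n=n_n$ enter the construction: the $v_n$ play no role, and $\mathcal{F}_1(k)$, $A_k$, $\mu_k$, $\delta_k$, $\nu_k$, $B_k$, $\mathcal{G}(k)$ have already been redefined accordingly at the start of Subsection~\ref{subsec2.2}. First I would record the trivial bound $c_j(\gamma+4k+i)\le\sum_{n_\ell\mid\gamma+4k+i}1=\lambda_1(\gamma+4k+i)+\lambda_2(\gamma+4k+i)$, where $\lambda_1$ counts the divisors of $\gamma+4k+i$ lying in $\mathcal{F}_1(k)$ and $\lambda_2$ counts those outside $\mathcal{F}_1(k)$, exactly as in \eqref{eq:15}.

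The $\lambda_1$ term I would bound uniformly in $\gamma$. If $n_\ell\in\mathcal{F}_1(k)$ divides $\gamma+4k+i$, then by the congruences \eqref{cong1} it also divides $\gamma+h$ for some $h$ with $1\le h\le 8k-4$, hence $n_\ell\mid 4k+i-h$; since $|i|\ge 3k$ is large we have $4k+i-h\neq0$, so $n_\ell\le|4k+i-h|\le 5|i|\le\xi^{|i|}/2$ once $k$ is large in terms of $\xi$, giving $\lambda_1(\gamma+4k+i)<\xi^{|i|}/2$ for every $\gamma\in\mathcal{G}(k)$. For $\lambda_2$, the crude size estimate $n_\ell\le\gamma+4k+i\le 2A_kB_k+i<\xi^i/2$ disposes of the range $i>A_kB_k/2$; for the remaining values of $i$ I would, for each fixed $i$, estimate the number $N_i$ of $\gamma\in\mathcal{G}(k)$ with $\lambda_2(\gamma+4k+i)\ge\xi^{|i|}/2$ by the sieve-type count from the third step of the proof of Lemma~\ref{lem:1} — the Chinese Remainder Theorem enumeration of $\mathcal{G}(k)$, the Prime Number Theorem bound on $\pi(2A_kB_k)$ via \eqref{AB_bound}, and the tail bound from \eqref{def:nuk} — obtaining $N_i\le 2B_k\xi^{-|i|}$. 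Summing the resulting geometric series over the admissible $i$ then bounds the number of exceptional $\gamma$ by $\tfrac{1}{4}\delta_kB_k$, using $\delta_k=\exp(-16k\sum_{\ell>\mu_k}1/n_\ell)$ and the largeness of $k$.

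Putting the three estimates together shows that, for all but at most $\tfrac{1}{4}\delta_kB_k$ of the $\gamma\in\mathcal{G}(k)$, one has $\lambda_2(\gamma+4k+i)<\xi^{|i|}/2$ for every admissible $i$, whence $c_j(\gamma+4k+i)\le\lambda_1+\lambda_2<\xi^{|i|}$, which is \eqref{eq:1034}. I do not expect any genuinely new obstacle here: every analytic ingredient was already established in Subsection~\ref{subsec2.1} and transfers unchanged after the redefinitions, and if anything the argument is slightly cleaner, since $\mathcal{F}_1(k)$ now involves only the $u_n$, so the split $\lambda_1+\lambda_2$ and the divisibility argument via \eqref{cong1} are a touch simpler than in Lemma~\ref{lem:3}. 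The only point warranting care is the bookkeeping: one must confirm that the redefined $A_k$, $B_k$, $\delta_k$ still satisfy the inequalities \eqref{AB_bound}--\eqref{B_bound} used to bound $\pi(2A_kB_k)$, which they do, since only the condition $(H_2)$ was ever used in deriving them.
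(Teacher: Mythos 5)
Your proposal is correct and coincides with the paper's treatment: the paper itself establishes Lemma~\ref{lem:4} by observing that the argument of Lemma~\ref{lem:3} carries over verbatim once $b_j$ is replaced by $c_j$ and the quantities $A_k$, $\mu_k$, $\delta_k$, $\nu_k$, $B_k$, $\mathcal{G}(k)$ are redefined using only the $u_n=n_n$. Your bookkeeping check that \eqref{AB_bound}--\eqref{B_bound} survive the redefinition is exactly the right point to verify, and it does hold.
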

\section{Proof of Theorem~\ref{thm:1}}\label{Sec3}
In this section,  let $A_k$, $\eta_k$, $\mu_k$, $\delta_k$, $\nu_k$, $B_k$, and $\mathcal{G}(k)$ be as in $\S$~\ref{subsec2.1} or $\S$~\ref{subsec2.2}. 
By Lemmas~\ref{lem:1} (resp. \ref{lem:2}) and \ref{lem:3} (resp. \ref{lem:4}), the number of integers 
$\gamma\in\mathcal{G}(k)$ satisfying 
\eqref{b_values1} (resp. \eqref{b_values2}) and \eqref{eq:103} (resp. \eqref{eq:1034}) is at least 
\[
\frac{\delta_k}{2}B_k+\left(1-\frac{\delta_k}{4}\right)B_k-B_k=\frac{\delta_k}{4}B_k
=\frac{\delta_k}{4A_k}\cdot{A_kB_k}
\ge
\frac{\delta_k}{4A_k}\cdot\frac{128kA_k}{\delta_k}=32k,
\]
where we used (\ref{AB_bound}). 
Thus, we obtain 
\begin{prop}\label{prop:1}
If $\mathcal{E}_1$ and $\mathcal{E}_2$ are both infinite, then 
there exists an integer $\gamma_0\in\mathcal{G}(k)$ such that
the properties \eqref{b_values1} 
and \eqref{eq:103} are fulfilled. 
Similarly, if one of $\mathcal{E}_1$ and $\mathcal{E}_2$ is finite, 
there exists an integer $\gamma_0\in\mathcal{G}(k)$ such that
the properties \eqref{b_values2} 
and \eqref{eq:1034} are fulfilled. 
\end{prop}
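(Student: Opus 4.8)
The plan is to read off the statement as a straightforward pigeonhole/counting consequence of the three ingredients already assembled: Lemma~\ref{lem:1} (resp.\ Lemma~\ref{lem:2}), Lemma~\ref{lem:3} (resp.\ Lemma~\ref{lem:4}), and the size bound $B_k\ge 128k$ from \eqref{B_bound}. Concretely, in the case where $\mathcal{E}_1$ and $\mathcal{E}_3$ are both infinite, Lemma~\ref{lem:1} supplies a subset $\mathcal{G}_1\subset\mathcal{G}(k)$ of cardinality at least $\delta_kB_k/2$ on which \eqref{b_values1} holds, and Lemma~\ref{lem:3} (applied with any fixed $\xi>1$, say $\xi=2$, and $k$ large enough in terms of $\xi$) supplies a subset $\mathcal{G}_2\subset\mathcal{G}(k)$ of cardinality at least $(1-\delta_k/4)B_k$ on which \eqref{eq:103} holds. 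Since both are subsets of the $B_k$-element set $\mathcal{G}(k)$, the intersection $\mathcal{G}_1\cap\mathcal{G}_2$ has cardinality at least
\[
\frac{\delta_k}{2}B_k+\Bigl(1-\frac{\delta_k}{4}\Bigr)B_k-B_k=\frac{\delta_k}{4}B_k,
\]
which is exactly the computation displayed just before the proposition in the excerpt.

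The second step is to observe that $\delta_kB_k/4$ is not merely positive but in fact at least $32k\ge 1$: rewriting $\tfrac{\delta_k}{4}B_k=\tfrac{\delta_k}{4A_k}\cdot A_kB_k$ and inserting the lower bound $A_kB_k\ge \exp(128kA_k/\delta_k)\ge 128kA_k/\delta_k$ from \eqref{AB_bound} gives $\tfrac{\delta_k}{4}B_k\ge 32k$. Hence $\mathcal{G}_1\cap\mathcal{G}_2$ is nonempty, and any element of it may be named $\gamma_0$; by construction it satisfies \eqref{b_values1} and \eqref{eq:103} simultaneously. For the case where one of $\mathcal{E}_1$, $\mathcal{E}_3$ is finite, the identical argument applies verbatim with Lemma~\ref{lem:1} replaced by Lemma~\ref{lem:2} and Lemma~\ref{lem:3} replaced by Lemma~\ref{lem:4}, yielding a $\gamma_0\in\mathcal{G}(k)$ fulfilling \eqref{b_values2} and \eqref{eq:1034}.

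There is essentially no obstacle here: the proposition is a bookkeeping corollary of the preceding lemmas, and the only point requiring a word of care is that Lemma~\ref{lem:3} / Lemma~\ref{lem:4} has a hypothesis "$k$ sufficiently large depending on $\xi$", so the statement should be understood (as elsewhere in Section~\ref{sec:2}) for all sufficiently large $k$ once $\xi$ has been fixed; this is harmless since $k$ is exactly the free parameter that is later sent to infinity in the proof of Theorem~\ref{thm:1}. One should also note in passing the typo "$\mathcal{E}_2$" for "$\mathcal{E}_3$" in the statement of the proposition. The whole argument is one displayed inequality plus two sentences, so I would simply present the counting line above, invoke \eqref{AB_bound} to get $\ge 32k$, and conclude nonemptiness.
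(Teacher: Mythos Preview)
Your proposal is correct and matches the paper's own argument essentially line for line: the paper also combines Lemmas~\ref{lem:1} and \ref{lem:3} (resp.\ \ref{lem:2} and \ref{lem:4}) via the same pigeonhole computation $\tfrac{\delta_k}{2}B_k+(1-\tfrac{\delta_k}{4})B_k-B_k=\tfrac{\delta_k}{4}B_k$, and then invokes \eqref{AB_bound} to bound this below by $32k$. Your remarks about the ``$k$ sufficiently large'' hypothesis and the $\mathcal{E}_2$/$\mathcal{E}_3$ typo are also accurate.
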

Now we prove Theorem~\ref{thm:1}. 
\begin{proof}[Proof of Theorem~\ref{thm:1}]
We first show Theorem~\ref{thm:1} in the case where 
$\mathcal{E}_1$ and $\mathcal{E}_2$ are both infinite.  
Fix $\gamma_0\in \mathcal{G}(k)$ in Proposition~\ref{prop:1}. 
Define $\varepsilon_j:=0$ if $j=1$ or $3$ and $\varepsilon_j:=1$ 
if $j=2$ or $4$. Since $\gamma_0\equiv0\pmod{4}$, 
by (\ref{b_values1}) 
\begin{align*}
\sum_{n=\gamma_0+1}^{\gamma_0+8k-4}\frac{b_j(n)}{\alpha^n}
&=\sum_{m=1}^{8k-4}\frac{b_j(\gamma_0+m)}{\alpha^{\gamma_0+m}}\\
&=\sum_{r=1}^{8}\sum_{q=0}^{k-2}\frac{b_j(\gamma_0+8q+r)}{\alpha^{\gamma_0+8q+r}}
+\sum_{r=1}^{4}\frac{b_j(\gamma_0+8(k-1)+r)}{\alpha^{\gamma_0+8(k-1)+r}}\\
&=\sum_{h=0}^{3}\sum_{q=0}^{k-2}\frac{2^q}{\alpha^{\gamma_0+8q+2h+1+\varepsilon_j}}
+\frac{k^j2^{k-1}}{\alpha^{\gamma_0+8(k-1)+j}}
+\frac{2^{k-1}}{\alpha^{\gamma_0+8(k-1)+4-j+2\varepsilon_j}}\\
&=\frac{(\alpha^2+1)(\alpha^4+1)}{\alpha^{\gamma_0-1+\varepsilon_j}(\alpha^8-2)}
\left(1-\left(\frac{2}{\alpha^8}\right)^{k-1}\right)
+\frac{k^j\alpha^{4-j}+\alpha^{j-2\varepsilon_j}}{\alpha^{\gamma_0+8(k-1)+4}}2^{k-1}.
\end{align*}
Hence, we obtain by \eqref{fs} 
\begin{align*}
P_{k}^{(j)}
&:=
\alpha^{\gamma_0}
\left(
f_j(\alpha^{-1})
-\sum_{n=1}^{\gamma_0}\frac{b_j(n)}{\alpha^n}
-\frac{(\alpha^2+1)(\alpha^4+1)}{\alpha^{\gamma_0-1+\varepsilon_j}(\alpha^8-2)}
\right)\\
&=
\left(
\frac{k^j\alpha^{4-j}+\alpha^{j-2\varepsilon_j}}{\alpha^4}
-\frac{(\alpha^2+1)(\alpha^4+1)}{\alpha^{\varepsilon_j-1}(\alpha^8-2)}
\right)
\left(\frac{2}{\alpha^8}\right)^{k-1}
+
\alpha^{\gamma_0}\sum_{n=\gamma_0+8k-3}^{\infty}\frac{b_j(n)}{\alpha^n}
\end{align*}
for $j=1,2,3,4$. 
As mentioned at the beginning of Section~\ref{sec:2}, we find that 
$|\alpha|^8>|\alpha|^5>2$. Hence, choosing $\xi$ with $1<\xi<\sqrt[8]{2}$ in 
(\ref{eq:103}), we have  
\begin{align*}
\left|
\alpha^{\gamma_0}\sum_{n=\gamma_0+8k-3}^{\infty}
\frac{b_j(n)}{\alpha^n}
\right|
&\leq \frac{1}{|\alpha|^{4k}}
\sum_{i=4k-3}^{\infty}\frac{|b_j(\gamma_0+4k+i)|}{|\alpha|^{i}}\\
&<\frac{1}{|\alpha|^{4k}}
\sum_{i=4k-3}^{\infty}
\left(
\frac{\sqrt[8]{2}}{|\alpha|}
\right)
^i
=O\left(
\left(\frac{2}{|\alpha|^8}
\right)^{k}\right)
\end{align*}
as $k\to\infty$, and therefore, 
\begin{equation}\label{P_j2}
P_{k}^{(j)}
=
\left(
\frac{k^j\alpha^{4-j}+\alpha^{j-2\varepsilon_j}}{\alpha^4}
-\frac{(\alpha^2+1)(\alpha^4+1)}{\alpha^{\varepsilon_j-1}(\alpha^8-2)}
\right)
\left(\frac{2}{\alpha^8}\right)^{k-1}
+
O\left(
\left(\frac{2}{|\alpha|^8}
\right)^{k}\right)
\end{equation}
as $k\to\infty$ for $j=1,2,3,4$. 

Let $\alpha$ be an algebraic integer given in Theorem~\ref{thm:1} 
and let $\alpha_1,\alpha_2,\dots,\alpha_m$ $(|\alpha_i|<1)$ 
be the conjugates of $\alpha$ over $\mathbb{Q}$ other than itself and its complex conjugate. 
Now we choose and fix a constant $\xi$ with $1<\xi<\sqrt[8]{2}$
satisfying $2\xi^{6m}<|\alpha|^8$ and $\xi|\alpha_i|<1$ for any $i=1,2,\dots,m$. 
Suppose to the contrary that the numbers \eqref{5numbers} are linearly dependent over $\mathbb{Q}(\alpha)$, namely, there exist  algebraic integers $\rho_j\in\mathbb{Q}(\alpha)$ $(j=1,2,3,4)$, not all zero, such that 
\[
\Theta:=\rho_1f_1(\alpha^{-1})+\rho_2 f_2(\alpha^{-1})+
\rho_3f_3(\alpha^{-1})+\rho_4 f_4(\alpha^{-1})\]
belongs to the field $\mathbb{Q}(\alpha)$. 
Let $\sigma_i:\mathbb{Q}(\alpha)\rightarrow\mathbb{C}$ be
the $m$ embeddings with $\sigma_i(\alpha)=\alpha_i$ $(i=1,2,\dots,m)$, and 
$d$ be a positive integer such that both 
$d\Theta$ and $d(\alpha^8-2)^{-1}$ are algebraic integer. 
Define 
\begin{align}
\Theta_k
&:=\rho_1P_{k}^{(1)}+\rho_2P_{k}^{(2)}+\rho_3P_{k}^{(3)}+\rho_4P_{k}^{(4)}\label{eq:753}\\
&=\alpha^{\gamma_0}
\Theta
-\alpha^{\gamma_0}
\sum_{j=1}^{4}\sum_{n=1}^{\gamma_0}\frac{\rho_jb_j(n)}{\alpha^n}
-\frac{(\alpha^2+1)(\alpha^4+1)}{\alpha^8-2}
\sum_{j=1}^{4}\rho_j\alpha^{1-\varepsilon_j}. \nonumber
\end{align}
Let $\overline{\Theta}_k$ be a complex conjugate of $\Theta_k$. 
Since  $d\Theta_k$ is an algebraic integer in $\mathbb{Q}(\alpha)$, the norm of $d\Theta_k$ over $\mathbb{Q}$
\begin{equation}\label{eq:236}
\mathcal{N}_k:={{N}_{\mathbb{Q}(\alpha)/\mathbb{Q}}}(d\Theta_k)=d^{m+1}\Theta_k\Phi_k\prod_{i=1}^{m}\Theta_k^{\sigma_i}
\end{equation}
is a rational integer, 
where $\Phi_k:=d\overline{\Theta}_k$, if $\Theta_k\neq \overline{\Theta}_k$, $:=1$, otherwise. 
By (\ref{eq:753}) we have  
\begin{equation}\label{eq:11061}
\Theta_k^{\sigma_i}
=\alpha_i^{\gamma_0}
\Theta^{\sigma_i}
-\alpha_{i}^{\gamma_0}
\sum_{j=1}^{4}\sum_{n=1}^{\gamma_0}\frac{\rho_j^{\sigma_i}b_j(n)}{\alpha_i^n}
-\frac{(\alpha_i^2+1)(\alpha_i^4+1)}{\alpha_i^8-2}
\sum_{j=1}^{4}\rho_j^{\sigma_i}\alpha_i^{1-\varepsilon_j}
\end{equation}
for $i=1,2,\dots,m$. 
Let $\rho:=\max\{|\rho_j^{\sigma_i}|\mid i=1,2\dots,m,j=1,2,3,4\}$.
Using the property (\ref{eq:103}), we obtain  
\begin{align*}
\left|\alpha_{i}^{\gamma_0}
\sum_{j=1}^{4}\sum_{n=1}^{\gamma_0}\frac{\rho_j^{\sigma_i}b_j(n)}{\alpha_i^n}\right|
&=
\left|\sum_{j=1}^{4}
\sum_{n=4k}^{\gamma_0+4k-1}
\frac{\rho_j^{\sigma_i}
b_j(\gamma_0+4k-n)}{\alpha_i^{4k-n}}\right|
\le4\rho\sum_{n=4k}^{\infty}\xi^{n}|\alpha_i|^{n-4k}\\
{}&\le4\rho\xi^{4k}\sum_{n=0}^{\infty}(\xi|\alpha_i|)^{n}
\le\xi^{5k}
\end{align*}
for large $k$, so that by (\ref{eq:11061})    
\begin{equation}\label{eq:11062}
|\Theta_k^{\sigma_i}|\leq 
|\alpha_i|^{\gamma_0}|\Theta^{\sigma_i}|+
\xi^{5k}+
\xi^k
<
|\Theta^{\sigma_i}|+
\xi^{5k}+
\xi^k
\leq 
\xi^{6k}
\end{equation}
for any $i=1,2,\dots,m$. 
Moreover, we have by \eqref{P_j2}
\[
\Theta_k=
D_k\left(\frac{2}{\alpha^8}\right)^{k-1}
+
O\left(
\left(\frac{2}{|\alpha|^8}
\right)^k
\right),
\]
where 
\[
D_k:=\sum_{j=1}^{4}\rho_j\left(
\frac{k^j\alpha^{4-j}+\alpha^{j-2\varepsilon_j}}{\alpha^4}
-\frac{(\alpha^2+1)(\alpha^4+1)}{\alpha^{\varepsilon_j-1}(\alpha^8-2)}
\right).
\]
Note that $|D_k|\rightarrow\infty$ $(k\rightarrow\infty)$, since the 
$\rho_j$ are not all zero. 
Hence, the number $\Theta_k$ does not vanish and 
\begin{equation}\label{eq:305}
0<|\Theta_k|<k^5\left(\frac{2}{|\alpha|^8}
\right)^{k}
\end{equation}
for sufficiently large $k$.  
Since $|\overline{\Theta}_k|=|\Theta_k|\rightarrow0$ $(k\rightarrow\infty)$, 
we have $|\Phi_k|\leq1$, and therefore by (\ref{eq:236}), (\ref{eq:11062}), and (\ref{eq:305})
\[1\leq 
|\mathcal{N}_k|\leq d^{m+1}k^5
\left(
\frac{2}{|\alpha|^8}
\right)^{k}
\xi^{6mk}\leq k^6
\left(
\frac{2\xi^{6m}}{|\alpha|^8}
\right)^k.
\]
This is a contradiction, since $2\xi^{6m}<|\alpha|^8$ 
by our choice of $\xi$. 
Thus, the proof of Theorem~\ref{thm:1} is completed 
in the case where $\mathcal{E}_1$ and $\mathcal{E}_3$ are both infinite. 

Next we consider the case where one of $\mathcal{E}_1$ and $\mathcal{E}_3$ is finite. 
As mentioned at the beginning of subsection~\ref{subsec2.2}, 
the set of functions $f_j(z)$ $(j=1,2,3,4)$ coincides with the sets of the functions 
\begin{equation}\label{gs}
h_j(z):=\sum_{j=1}^{\infty}c_j(n)z^n,\qquad j=1,2,3,4,
\end{equation}
where the sequences $c_j(n)$ $(j=1,2,3,4)$ are defined in \eqref{cs}. 
Similarly as in the previous case, we find by Proposition~\ref{prop:1} that 
there exists an integer $\gamma_0\in\mathcal{G}(k)$ such that 
\begin{equation}\label{c}
\sum_{n=\gamma_0+1}^{\gamma_0+8k-4}\frac{c_j(n)}{\alpha^n}=
\frac{\alpha^4+1}{\alpha^{\gamma_0-j-4}(\alpha^8-2)}
\left(
1-\left(
\frac{2}{\alpha^8}
\right)^{k-1}
\right)
+
\frac{k^j}{\alpha^{\gamma_0+8(k-1)+j}}2^{k-1}
\end{equation}
for $j=1,2,3,4$. Hence, we have by  \eqref{gs} and \eqref{c}  
\begin{align*}
Q_{k}^{(j)}
&:=
\alpha^{\gamma_0}
\left(
h_j(\alpha^{-1})
-\sum_{n=1}^{\gamma_0}\frac{c_j(n)}{\alpha^n}
-\frac{\alpha^4+1}{\alpha^{\gamma_0-j-4}(\alpha^8-2)}
\right)\\
&=
\left(
\frac{k^j}{\alpha^j}
-\frac{\alpha^{j+4}(\alpha^4+1)}{\alpha^8-2}
\right)
\left(\frac{2}{\alpha^8}\right)^{k-1}
+
\alpha^{\gamma_0}\sum_{n=\gamma_0+8k-3}^{\infty}\frac{c_j(n)}{\alpha^n}
\end{align*}
for $j=1,2,3,4$. 
The rest of the proof is completely the same as in 
the case where $\mathcal{E}_1$ and $\mathcal{E}_3$ 
are both infinite. 
\end{proof}

\section*{Acknowledgements} 
The authors would like to deeply thank 
Professor Hajime Kaneko for pointing out to us the reference~\cite{Gar}. 
They also express their sincere gratitude to Professor Joseph Vandehey
for his comments on Erd\H os's paper \cite{Erd2}. 
This work was supported by JSPS KAKENHI Grant Numbers
JP16J00906 and JP18K03201.



\end{document}